\newcommand\C{{\mathbb C}}
\newcommand\dee{\partial}
\newcommand\Om{\Omega}
\newcommand\Obar{\overline{\Omega}}
\numberwithin{equation}{section}
\begin{document}

\title[Improved Riemann Mapping Theorem]
{An improved Riemann Mapping Theorem\\
 and complexity in potential theory}
\author[S.~R.~Bell]
{Steven R. Bell}

\address[]{Mathematics Department, Purdue University, West Lafayette,
IN  47907}
\email{bell@math.purdue.edu}

\subjclass{30C35}
\keywords{Quadrature domains, Schwarz function, Szeg\H o kernel}

\begin{abstract}
We discuss applications of an improvement on the Riemann mapping
theorem which replaces the unit disc by another
``double quadrature domain,'' i.e., a domain that
is a quadrature domain with respect to both area and
boundary arc length measure.  Unlike the classic Riemann
Mapping Theorem, the improved theorem allows
the original domain to be finitely connected, and if
the original domain has nice boundary, the biholomorphic
map can be taken to be close to the identity, and consequently,
the double quadrature domain close to the original domain.
We explore some of the parallels between this new theorem
and the classic theorem, and some of the similarities
between the unit disc and the double quadrature domains
that arise here.  The new results shed light on the complexity
of many of the objects of potential theory in multiply
connected domains.
\end{abstract}

\maketitle

\theoremstyle{plain}

\newtheorem {thm}{Theorem}[section]
\newtheorem {lem}[thm]{Lemma}

\hyphenation{bi-hol-o-mor-phic}
\hyphenation{hol-o-mor-phic}

\section{Introduction}
\label{sec1}

The unit disc is the most famous example of a ``double quadrature
domain.''  The average of an analytic function on the disc with
respect to both area measure and with respect to boundary arc length
measure yield the value of the function at the origin when these
averages make sense.  The Riemann Mapping Theorem states that
when $\Om$ is a simply connected domain in the plane that is not equal
to the whole complex plane, a biholomorphic map of $\Om$ to this
famous double quadrature domain exists.

We proved a variant of the Riemann Mapping Theorem in \cite{BGS}
that allows the domain $\Om\ne\C$ to be simply or finitely
connected.  The theorem states that there is a biholomorphic mapping
of such regions onto a one point double quadrature domain, i.e., a
bounded domain such that the average of a holomorphic function with respect
to area measure is a fixed finite linear combination of the function
and its derivatives evaluated at a single point, and such that
the same is true of the average with respect to arc length measure
(with different constants, of course) when these averages make sense.
If the domain is a bounded domain bounded by finitely many $C^\infty$
smooth non-intersecting curves, the double quadrature domain can be
taken to be as $C^\infty$ close to the original as desired, and the
biholomorphic map $C^\infty$ close to the identity.  (For definitions
and a precise statement of the result, see \cite{BGS}.) \ If the boundary
curves are Jordan curves, then a standard argument in conformal
mapping theory shows that we may conformally map the domain to
a close by domain bounded by real analytic curves via a conformal
map that is close to the identity in the sup norm on the closure
of the domain.  If we now apply the $C^\infty$ theorem to the domain
with real analytic boundary and compose, we find that the double
quadrature domain can be taken to be close to the original domain,
and the biholomorphic map close to the identity in the sup norm on
the closure of the domain.

Double quadrature
domains satisfy a long list of desirable properties, many of which
are spelled out in \cite{BGS}.  The purpose of this paper is to add
to that list and derive some consequences from it.  We will show
that double quadrature domains have many properties in common
with the unit disc that allow the objects of potential theory
and complex analysis to be expressed in rather simple terms.
In particular, the action of many classical operators on rational
functions will be seen to be particularly simple.

In \S\ref{secD}, we show that the solution to the Dirichlet
problem with rational data on an area quadrature domain is
real algebraic modulo an explicit finite dimensional subspace.
Since derivatives of the Green's function with respect to
the second variable are solutions to a Dirichlet problem with
rational data, this yields information about the complexity
of the Green's function and the closely related Poisson kernel.
The results yield a method to solve the Dirichlet problem
using only algebra and finite mathematics in place of
the usual analytical methods.

In \S\ref{secM}, we explain how biholomorphic and proper
holomorphic mappings between double quadrature domains
are particularly simple, and in \S\ref{secC}, we show
how to pull back the results of \S\ref{secD} to more
general classes of domains that arise naturally.  We
define two classes, which we name {\it algebraic\/} and
{\it Briot-Bouquet}, that are interesting from this
point of view.

In \S\ref{secO}, we show that many of the classical
operators attached to a double quadrature domain map
rational functions to rational or algebraic functions.
The Szeg\H o projection, for example, maps rational
functions to rational functions as an operator from
$L^2$ of the boundary to itself, and maps rational
functions to algebraic functions as an operator from
$L^2$ of the boundary to the space of holomorphic
functions on the domain.  The Kerzman-Stein operator
maps rational functions to rational functions.  These
results demonstrate that double quadrature domains are very
much like the unit disc in this regard.

In the last section \S\ref{secF}, we explain an
analogy between the techniques of the paper and
classical Fourier analysis.  The analogy reveals
a way to view the main results of the paper on
multiply connected domains as a way of doing Fourier
analysis on multiple curves.

For the history of the study of quadrature domains and the
many applications they have found, see the book \cite{EGKP}
and the article \cite{GS} therein, and Shapiro's classic
text \cite{S}.  Darren Crowdy \cite{C} has shown that double
quadrature domains arise in certain problems in fluid dynamics,
so the present work could find applications in that area.

\section{Basic properties of quadrature domains}

Suppose that $\Om$ is a bounded finitely connected domain in the
plane.  Bj\"orn Gustafsson \cite{G} proved that if $\Om$ is an
area quadrature domain, then $\Om$ must have
piecewise real analytic boundary and the Schwarz function $S(z)$
associated to $\Om$ extends meromorphically to the domain.  (Aharonov
and Shapiro \cite{AS} first proved extendability of the Schwarz function
in the simply connected case.)  Consequently, since
$S(z)=\bar z$ on the boundary, $z$ extends meromorphically
to the double given values $\overline{S(z)}$ on the ``backside,''
and $S(z)$ extends given values $\bar z$ on the backside.
Gustafsson showed that the field of meromorphic functions on
the double is generated by the extensions of $z$ and $S(z)$,
i.e., the extensions form a primitive pair.  This implies that
a meromorphic function on the domain that extends meromorphically to the
double must be a rational combination of $z$ and the Schwarz function.
Since $z$ and $S(z)$ both extend to the double, they are algebraically
dependent.  Therefore $S(z)$ is an algebraic function (as noted
by Gustafsson, and by Aharonov and Shapiro in the simply connected
case).  If $\Om$ has no cusps in the boundary, then Gustafsson
\cite{G} showed that the boundary is given by finitely many
non-intersecting real analytic, real algebraic, curves of a special
form.

If $\Om$ is a boundary arc length quadrature domain, then
Gustafsson showed \cite{G2} that the boundary is real analytic
and the complex unit tangent vector function $T(z)$ must extend
as a meromorphic function on the double.  (Shapiro and Ullemar
\cite{SU} first showed this in the simply connected case.)

If $\Om$ is a double quadrature domain, then all of the properties
above hold and it follows that $T(z)$ must be a rational combination
of $z$ and $S(z)$, and consequently, since $S(z)=\bar z$ on the
boundary, $T(z)$ is a rational function of $z$ and $\bar z$.
It is proved in \cite{BGS} that the Bergman kernel $K(z,w)$ and
the Szeg\H o kernel $S(z,w)$ are both rational combinations of $z$,
$S(z)$, $\bar w$, and $\overline{S(w)}$, and consequently are
rational functions of $z$, $\bar z$, $w$, and $\bar w$ when
restricted to the boundary cross the boundary.  Furthermore, the
complex polynomials belong to both the Bergman span and the
Szeg\H o span associated to the domain.  The Kerzman-Stein
kernel $A(z,w)$ is also a rational function of $z$, $\bar z$, $w$,
and $\bar w$.  (We will show in \S\ref{secO} that the Kerzman-Stein
operator sends rational functions to rational functions on double
quadrature domains.  Many of the ideas used in the work leading
up to the main results of this paper trace back to the papers of
Kerzman and Stein \cite{K-S} and Kerzman and Trummer \cite{K-T}.)

These are the basic properties of quadrature domains that we
shall need in order to proceed.

\section{The Dirichlet problem, Green's function, and Poisson
kernel on quadrature domains}
\label{secD}

Peter Ebenfelt \cite{E} proved that if $\Om$ is a bounded simply
connected area quadrature domain and $R(z,\bar z)$ is a real valued
rational function of $z$ and $\bar z$ with no singularities on
$b\Om\times b\Om$, then the Poisson extension of $R(z,\bar z)$ to
$\Om$ is the real part of a rational function of $z$ and the
Schwarz function $S(z)$ for $\Om$.  Consequently, the Poisson
extension is the real part of an algebraic function.  Ebenfelt's
proof is short and uses a very appealing reflection argument.
In our desperation to generalize Ebenfelt's theorem to bounded
multiply connected quadrature domains, we found two alternate
proofs that we could generalize more readily than the reflection
argument.  Each alternate proof gives new insight into the
extension problem, and we will need both approaches later, so
we now present both alternate proofs of Ebenfelt's theorem in the
simply connected case as a way to launch into the generalizations.
(Besides, you can never have too many proofs of a good theorem.)

Suppose $\Om$ is a bounded simply connected area quadrature domain
with no cusps in the boundary, and suppose $\psi(z)=R(z,\bar z)$
is a real valued rational function of $z$ and $\bar z$ with no
singularities on $b\Om\times b\Om$.  We know that $\Om$ has
$C^\infty$ smooth real analytic boundary and that the Schwarz
function $S(z)$ for $\Om$ extends meromorphically to $\Om$.
Furthermore, the field of meromorphic functions on the double of
$\Om$ is generated by $z$ and $S(z)$.  Since $S(z)=\bar z$ on the
boundary, by writing $R(z,\bar z)=R(z,S(z))$
for $z$ in the boundary, we can see that the only possible type of
singularity for a rational function of $z$ and $\bar z$ on the
boundary would have pole-like behavior at isolated points.  Note
that writing $R(z,\bar z)=R(z,S(z))$ yields an extension of $\psi$
from $b\Om$ to $\Om$ as a meromorphic function and that writing
$R(z,\bar z)=R(\overline{S(z)},\bar z)$ yields an extension of $\psi$
from $b\Om$ to $\Om$ as an antimeromorphic function. The Poisson
extension of $\psi$ to $\Om$ is given as the real part of a
holomorphic function $h$ that extends $C^\infty$-smoothly up to
the boundary.

Note that a meromorphic function that extends smoothly to
the boundary extends meromorphically to the double if and
only if there is an anti-meromorphic function on the domain
that extends smoothly to the boundary with the same boundary
values.

Since $h+\overline{h}=2\psi$ on the boundary, we may write
$$h=-\overline{h} + 2 R(\overline{S(z)},\bar z)$$
on the boundary to see that $h$ extends meromorphically to
the double.  Consequently, $h$ is a rational functions of
$z$ and $S(z)$.  Since, as remarked above, $S(z)$ is algebraic,
it follows that $h$ is algebraic.  Hence, the Poisson extension
is real algebraic.

When we extend this argument to the multiply connected setting,
we will need to use a generalization of the fact proved in \cite{Ba}
(see also \cite{B1}, p.~35), that the Poisson extension of $\psi$
to $\Om$ when $\Om$ is $C^\infty$ smooth and simply connected 
is given by $h+\overline{H}$ where
$$h=\frac{P(S_a\psi)}{S_a}\quad\text{and}\quad
H=\frac{P(L_a\overline{\psi}\,)}{L_a}.$$
Here, $P$ denotes the Szeg\H o projection associated to $\Om$,
$S_a(z)=S(z,a)$ is the Szeg\H o kernel, $L_a(z)=L(z,a)$ is the
Garabedian kernel, and $a$ is a fixed point in $\Om$.
For the basic properties of these objects, see \cite{B1},
pp.~1-35.  Note, in particular, that on a $C^\infty$ smooth bounded
finitely connected domain, $L(z,a)$ is $C^\infty$ smooth
on $\Obar\times\Obar$ minus the diagonal and $S(z,a)$ is $C^\infty$
smooth on $\Obar\times\Obar$ minus the boundary diagonal.  Furthermore,
$L(z,a)$ has a simple pole in $z$ at $z=a$ and is non-vanishing
on $\Obar\times\Obar$ minus the diagonal.  If $\Om$ is simply connected,
then $S(z,a)$ is non-vanishing on $\Obar\times\Obar$ minus
the boundary diagonal.  Since the Szeg\H o projection maps
$C^\infty(b\Om)$ into itself (see \cite{B1}, p.~13), it follows
that $h$ and $H$ are holomorphic functions in $C^\infty(\Obar)$.
In the simply connected case, it is easy to show that $h$ and $H$
extend meromorphically to the double as above, and that is a
major part of what we will do in the multiply connected setting.

The second proof of Ebenfelt's theorem uses the Green's function.
Let $f:\Om\to D_1(0)$ be a Riemann map associated to our simply
connected bounded area quadrature domain.  Aharonov and Shapiro
\cite{AS} showed that $f$ is algebraic.  The Green's function
associated to $\Om$ is given by
$$G(z,w)=-\ln\left|\frac{f(z)-f(w)}{1-f(z)\,\overline{f(w)}}\right|.$$
Since
$$\frac{\dee}{\dee w}
\ln\left|\frac{z-w}{1-z\,\bar w}\right|=
\frac{1-|z|^2}{2(w-z)(1- w \bar z)},$$
it is easy to use the complex chain rule to verify that derivatives of
the form
$$G^{(m)}(z,w):=\frac{\dee^m}{\dee w^m}G(z,w)$$
are rational combinations of $f(z)$ and $\overline{f(z)}$ in the
$z$ variable for fixed $w$.  Note that $G^{(m)}(z,w)$ is harmonic
in $z$, vanishes for $z$ in the boundary, and has a singularity of the
form  $c/(z-w)^m$ in $z$ with $c\ne 0$ at $z=w$.  If $R(z,\bar z)$ is
rational in $z$ and $\bar z$ without singularities on $b\Om$, then
$R(z,S(z))$ extends meromorphically to $\Om$ and is pole free on
the boundary.  It is therefore possible to find constants $c_{jk}$ and
positive integers $m_j$ associated to the poles $w_j$ of the meromorphic
function so that
$$R(z,S(z))-\sum_{j=1}^N\sum_{k=1}^{m_j} c_{jk}G^{(k)}(z,w_j)$$
has removable singularities at the poles $w_j$.  This, therefore, is
the Poisson extension of $R(z,\bar z)$ to the interior.  Since $S(z)$
and $f(z)$ are both algebraic and extend meromorphically to the
double of $\Om$, Ebenfelt's result follows.

We will now generalize these arguments to the multiply connected
setting.  The ideas generalize nicely, but the end results are
considerably more complicated to state.  Although the next
theorem seems long and complicated, the bottom line is that the
solution to the Dirichlet problem with rational boundary data
on a double quadrature domain is real algebraic modulo
an explicit finite dimensional subspace.  (Note that double
quadrature domains are bounded area quadrature domains without
cusps on the boundary because bounded boundary arc length
quadrature domains have real analytic boundaries.)

\begin{thm}
\label{thmA}
Suppose that $\Om$ is a bounded $n$-connected area quadrature
domain with no cusps in the boundary.  Suppose further that
$\psi(z)=R(z,\bar z)$ is a rational function of $z$ and $\bar z$
with no singularities on $b\Om\times b\Om$.  The solution to the
Dirichlet problem with boundary data $\psi$ is equal to 
$$h_0(z)+\overline{H_0(z)} +
\sum_{j=1}^{n-1}c_j\left(h_j(z)+\overline{H_j(z)}+\ln|z-b_j|\right),$$
where $h_0$ and $H_0$ are algebraic meromorphic functions on $\Om$
that are rational combinations of $z$ and the Schwarz function
$S(z)$ for $\Om$.  The $c_j$ are complex constants, and
the points $b_j$ are fixed points in the complement of $\Om$, one
in the interior of each bounded component of $\C-\Om$.
The functions $h_j$ and $H_j$, $j=1,\dots,n-1$ are meromorphic
functions on $\Om$ whose derivatives are algebraic functions
that extend meromorphically to the double of $\Om$, and hence
$h_j'$ and $H_j'$ are rational combinations of $z$ and $S(z)$.
The functions $H_j$, $j=0,1,\dots,n-1$ are holomorphic on $\Om$,
but the $h_j$ may have poles at $n-1$ points in $\Om$ specified
in the proof below.  The constants $c_j$ are determined via the
condition that the poles on $\Om$ should cancel so that
$$h_0 + \sum_{j=1}^{n-1}c_j h_j$$
is holomorphic on $\Om$.
\end{thm}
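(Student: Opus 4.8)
The plan is to generalize the second (Green's-function) proof given above, since the identity behind it survives verbatim in the multiply connected setting, while the only genuinely new phenomenon---the logarithmic terms---is exactly what the theorem isolates. As in the simply connected case, the boundary relation $\bar z=S(z)$ lets us extend $\psi$ off $b\Om$ as the meromorphic function $\Phi(z):=R(z,S(z))$, a rational combination of $z$ and $S(z)$ (hence algebraic), holomorphic near $b\Om$, with poles of orders $m_i$ at finitely many points $p_i\in\Om$. The functions $G^{(k)}(z,p_i)$ are harmonic in $z$, vanish on $b\Om$, and have as their only singularity a holomorphic pole $\sim c/(z-p_i)^k$ at $z=p_i$ (differentiating $-\ln|z-w|$ in $w$ kills the antiholomorphic part). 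Choosing constants $c_{ik}$ to cancel the principal parts of $\Phi$ makes $\Phi-\sum_{i,k}c_{ik}G^{(k)}(\cdot,p_i)$ harmonic on all of $\Om$ with boundary values $\psi$, so by uniqueness it equals the Dirichlet solution $u$. The whole problem therefore reduces to describing the structure, in the $z$ variable, of the derivatives $G^{(k)}(z,w)$, $k\ge1$, on a bounded $n$-connected area quadrature domain without cusps.

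First I would transfer the ``everything is rational in $z$ and $S(z)$'' principle from the Schwarz function to the Green's function by passing to the Schottky double. Because $\Om$ has no cusps, its boundary is real analytic and real algebraic and the field of meromorphic functions on the double is generated by $z$ and $S(z)$. The structural inputs I would establish are: that $\dee_z G^{(k)}(z,w)\,dz$ ($k\ge1$) extends to a \emph{residue-free} meromorphic differential on the double whose only singularity over $\Om$ is the pole at $z=w$; and that each harmonic-measure differential $\dee_z\omega_j(z)\,dz$ extends to a holomorphic (first-kind) differential. Consequently $\dee_z G^{(k)}(z,w)$ and the $\dee_z\omega_j$ are rational combinations of $z$ and $S(z)$, as are their $w$-derivatives. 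I would also record the classical fact that the periods of $\dee_z G(\cdot,w)$ around the bounded boundary curves are, up to a constant, the harmonic measures $\omega_j(w)$, so that differentiating in $w$ produces the algebraic functions $\dee_w^k\omega_j$.

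With these in hand I would, for fixed $w$, decompose $G^{(k)}(z,w)$ in the $z$ variable into a meromorphic part, an antimeromorphic part, and real-logarithmic terms. The obstruction to integrating the single-valued, algebraic one-form $\dee_z G^{(k)}\,dz$ to a single-valued function is measured purely by its periods around the holes (there are no residues, so no logarithm appears at $z=w$); subtracting $\ln|z-b_j|$ with the appropriate coefficients, which are $w$-differentiated harmonic measures and hence algebraic in $w$, removes exactly these periods. Single-valuedness of $G^{(k)}$ forces the antiholomorphic logarithmic coefficient to be the conjugate of the holomorphic one, so each pair combines into a single complex multiple of $\ln|z-b_j|$ with no multivalued argument surviving. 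Integrating the algebraic $\dee_z$-data then produces the holomorphic companions: the pieces whose primitives stay rational in $z$ and $S(z)$ are collected into $h_0$ and $\ol{H_0}$, while the primitives of the period-carrying pieces become the $h_j,H_j$, whose derivatives are rational in $z$ and $S(z)$ but which themselves acquire logarithms. Summing over the poles $p_i$ and evaluating the finite coefficients there assembles everything into the stated form $h_0+\ol{H_0}+\sum_j c_j\bigl(h_j+\ol{H_j}+\ln|z-b_j|\bigr)$.

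The main obstacle is the bookkeeping in this last decomposition, specifically the tension between demanding that $h_0,H_0$ be honestly rational in $z$ and $S(z)$ and demanding that $h_j,H_j$ have algebraic derivatives. Normalizing the period-primitives so that $h_0,H_0$ come out rational forces the $h_j$ to absorb simple poles at $n-1$ interior points---which the proof singles out (in the Szeg\H o-kernel picture, these are the zeros of the Szeg\H o kernel $S_a$ attached to a base point $a$)---and the analogous statement holds on the antiholomorphic side. Since $u$ is in fact harmonic, and thus pole-free, single-valuedness and regularity of the assembled sum force the constants $c_j$ to be precisely those for which $h_0+\sum_j c_j h_j$ is holomorphic; showing that this linear system is solvable, with the $b_j$ taken one to a hole, is the delicate point. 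The remaining assertions---algebraicity of $h_0,H_0$ and of $h_j',H_j'$, and their expression as rational combinations of $z$ and $S(z)$---then follow from the double-generation property already used for the Schwarz function.
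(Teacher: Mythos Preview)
Your route through the Green's function differs from the paper's primary proof. The paper establishes Theorem~\ref{thmA} by first proving the more general Theorem~\ref{thmB} via the Szeg\H o projection: one writes the Dirichlet solution as $h+\ol H+\sum c_j\ln|z-b_j|$ with $h=P(S_a\phi)/S_a$ and $H=P(L_a\bar\phi)/L_a$, then splits $h=h_0+\sum c_jh_j$ by setting $h_j:=-P(S_a\mathcal L_j)/S_a$. These $h_j$ are explicit and data-independent, and their only possible poles are at the $n-1$ zeros of $S_a$; a short boundary computation shows $h_0$ extends to the double while $(h_j'+\tfrac12(z-b_j)^{-1})\,dz$ extends as a meromorphic $1$-form. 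Theorem~\ref{thmA} then follows because on an area quadrature domain $T(z)^2$ extends to the double, which upgrades the $1$-form statement to ``$h_j'$ itself extends to the double.'' The paper does revisit the Green's-function approach afterward, but it invokes the already-proved Theorem~\ref{thmA} to analyze $G^{(1)}(z,w_k)$ and reach formula~(\ref{green2}); so that second route is not offered there as an independent proof.

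Your sketch has a gap precisely where you flag the ``main obstacle.'' It is true that $\dee_zG^{(k)}(z,w)\,dz$ extends to the double as a residue-free meromorphic $1$-form, and on an area quadrature domain this upgrades (via $T^2$) to $\dee_zG^{(k)}$ extending as a meromorphic function; so after subtracting suitable multiples of $\ln|z-b_j|$ you do obtain a single-valued $h$ with $h'$ rational in $z$ and $S(z)$. But the theorem asserts more: a splitting $h=h_0+\sum c_jh_j$ with $h_0$ itself (not merely $h_0'$) rational in $z,S(z)$ and with \emph{data-independent} $h_j$ whose poles sit at $n-1$ prescribed interior points. Your period-and-integrate scheme produces primitives tied to the $\psi$-dependent pole locations $p_i$ of $R(z,S(z))$, and nothing in the argument shows that their non-rational parts span a fixed $(n-1)$-dimensional space independent of $\psi$; your parenthetical appeal to the Szeg\H o-kernel zeros is imported from a construction you are not using. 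What you actually need is a factorization $G^{(k)}(z,w)=R_0(z,w)+\sum_j\sigma_j(z)R_j(w)$ with $w$-independent $\sigma_j$, which is exactly what the paper extracts from \cite{B2} \emph{combined with} Theorem~\ref{thmA}. To make your route self-contained you would have to supply, independently, either the Szeg\H o-projection construction of the $h_j$ or a direct Riemann-surface argument that primitives of meromorphic functions on the double, modulo meromorphic functions on the double, form an $(n-1)$-dimensional space with an explicit basis carrying the required pole structure.
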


It should also be noted that all the functions $h_j$ and $H_j$,
$j=0,1,\dots,n-1$, extend $C^\infty$ smoothly up to the boundary
of $\Om$ in Theorem~\ref{thmA}.  The functions $h_0$ and $H_0$
and the constants $c_j$ depend on the boundary data, but the
$h_j$ and $H_j$ for $j=1,2,\dots,n-1$ do not.  Since $z$ and $S(z)$
extend to form a primitive pair for the double, the functions that
extend to the double in Theorem~\ref{thmA} can be expressed as
$$\sum_{k=0}^{N-1} P_k(z)S(z)^k,$$
where $P_k$ are rational functions in $z$ and $N$ is the number
of poles of $S(z)$ in $\Om$ counted with multiplicities (see Farkas
and Kra \cite{F-K}, p.~249).  It is interesting to note that $h_0$
and $H_0$ are rational functions of $z$ and $\bar z$ when restricted
to the boundary that extend algebraically to the interior of $\Om$.

Theorem~\ref{thmA} can be interpreted to mean that the solution
to the Dirichlet problem with rational boundary data on a bounded
are quadrature domain without cusps in the boundary is a real
algebraic function $h_0+\overline{H_0}$ modulo an $n-1$ dimensional
subspace spanned by the functions
$$h_j+\overline{H_j}+\ln|z-b_j|,$$
which also have a rather simple structure.  In particular, since
the derivatives of $h_j$ and $H_j$ are rational functions of $z$
and $S(z)$ where $S(z)$ is algebraic, these functions are abelian
integrals in the classical sense.  We will show that there
are other interesting domain functions that can serve as the basis
for the $n-1$ dimensional subspace in Theorem~\ref{thmA} as we
proceed.

The starting point for proving Theorem~\ref{thmA} is the following
general theorem.  Note that, on an area quadrature domain, rational
functions of $z$ and $\bar z$ without singularities on the boundary
are precisely the functions on the boundary that are restrictions
to the boundary of meromorphic functions on the double without poles
on the boundary.  Indeed, as we have seen, since $S(z)=\bar z$ on
the boundary, we have $R(z,\bar z)=R(z,S(z))$ on the boundary, which
extends meromorphically to the double if $R$ is rational, and which
must have no poles on the boundary curves if $R(z,\bar z)$ has no
singularities.  Conversely, any meromorphic function on the double
can be expressed as a rational combination of $z$ and $S(z)$, and
hence, when restricted to the boundary, as a rational function of
$z$ and $\bar z$.

\begin{thm}
\label{thmB}
Suppose that $\Om$ is a bounded $n$-connected domain bounded
by $n$ non-intersecting $C^\infty$ smooth curves, and suppose
$\psi$ is a $C^\infty$ function on the boundary of $\Om$ that
is the restriction to the boundary of a meromorphic function on
the double of $\Om$.  Then the solution to the Dirichlet problem
on $\Om$ with boundary data $\psi$ is given by
$$h_0(z)+\overline{H_0(z)} +
\sum_{j=1}^{n-1}c_j\left(h_j(z)+\overline{H_j(z)}+\ln|z-b_j|\right),$$
where $h_0$ and $H_0$ are meromorphic functions on $\Om$ that extend
meromorphically to the double of $\Om$.
The functions $h_j$, $j=1,\dots,n-1$, are meromorphic functions
on $\Om$ such that
$$\left(h_j' - \frac{1}{2}\frac{1}{z-b_j}\right)\,dz$$
extends to the double as a meromorphic $1$-form.
The functions $H_j$,
$j=1,\dots,n-1$, are holomorphic functions on $\Om$ such that
$$\left(H_j' - \frac{1}{2}\frac{1}{z-b_j}\right)\,dz$$
extends to the double as a meromorphic $1$-form.
The $H_j$, $j=0,1,\dots,n-1$ are holomorphic on $\Om$,
but the $h_j$ may have poles at $n-1$ points in $\Om$ specified
in the proof below.  The $c_j$ are complex constants, and the
points $b_j$ are fixed points in the complement of $\Om$, one
in the interior of each bounded component of $\C-\Om$.  The constants
$c_j$ are determined via the condition that the poles on $\Om$ should
cancel so that $$h_0 + \sum_{j=1}^{n-1}c_j h_j$$
is holomorphic on $\Om$.
\end{thm}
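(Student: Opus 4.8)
The plan is to reduce the Dirichlet problem to an additive boundary decomposition and then build the pieces out of the Szeg\H o and Garabedian kernels, exactly as in the simply connected template recalled above, keeping careful track of the new multiply connected features. Since the solution $u$ of the Dirichlet problem is the \emph{unique} harmonic function on $\Omega$ with boundary values $\psi$ (and is smooth up to $\Obar$ because $b\Omega$ and $\psi$ are smooth), and since $h+\overline H$ is harmonic whenever $h,H$ are holomorphic while $\ln|z-b_j|$ is harmonic on $\Omega$ (as $b_j\notin\Omega$), it suffices to produce meromorphic $h_j$, holomorphic $H_j$, and constants $c_j$ with the asserted extension properties such that the displayed combination (i) is free of interior poles and (ii) has boundary values $\psi$. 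Uniqueness then finishes the identification.

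For the main term I would define $h_0=P(S_a\psi)/S_a$ and $H_0=P(L_a\overline{\psi}\,)/L_a$ verbatim from the recalled simply connected formula. Because $L(z,a)$ is non-vanishing for $z\ne a$ and has only a simple pole at $z=a$, division by $L_a$ leaves $H_0$ holomorphic on $\Omega$; by contrast $S(z,a)$ has exactly $n-1$ zeros in an $n$-connected domain, so $h_0$ is holomorphic except for possible poles at those $n-1$ points, which are the distinguished points referred to in the statement. The key step is to show $h_0$ and $H_0$ extend meromorphically to the double. Here I would use the standard boundary identity $\tfrac1i\,T(z)\,S(z,a)=\overline{L(z,a)}$ on $b\Omega$ (see \cite{B1}) to rewrite, on the boundary, $P(S_a\psi)=S_a\psi-(\,\text{term orthogonal to }H^2\,)$ and express the orthogonal term through $L_a$ and $\overline{\psi}$. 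Since $\psi$ is the restriction of a meromorphic function on the double, the remark preceding the theorem supplies an antimeromorphic function on $\Omega$ with the same boundary values, and substituting it forces $h_0$ (and likewise $H_0$) to admit the antimeromorphic companion needed to extend across $b\Omega$.

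Next I would construct the correction functions. For each bounded component of $\C-\Omega$, indexed $j=1,\dots,n-1$, the aim is a single-valued harmonic function $u_j=h_j+\overline{H_j}+\ln|z-b_j|$ on $\Omega$ whose only interior singularity is a pole of $h_j$ at the $j$-th distinguished point. The logarithm carries the one unavoidable nonzero conjugate-period, namely $\oint_{\gamma_j}*d\ln|z-b_k|=2\pi\delta_{jk}$ around the inner boundary curves $\gamma_k$, so this family realizes a nondegenerate period matrix. The functions $h_j,H_j$ are prescribed (again by a Szeg\H o/Garabedian recipe, so that they automatically carry poles at the Szeg\H o zeros) in such a way that $\bigl(h_j'-\tfrac12\frac1{z-b_j}\bigr)\,dz$ and $\bigl(H_j'-\tfrac12\frac1{z-b_j}\bigr)\,dz$ are meromorphic $1$-forms on the double; this is precisely what cancels the spurious periods of $h_j+\overline{H_j}$ against the logarithm and makes each $u_j$ genuinely single-valued on $\Omega$. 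One then imposes the $n-1$ linear conditions that $h_0+\sum_j c_j h_j$ be pole-free at the $n-1$ distinguished points, solvable by nondegeneracy of the residue matrix of the $h_j$ there, which determines the $c_j$.

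The final step is to check that the constructed expression actually has boundary values $\psi$, and this is where I expect the real difficulty. The $c_j$ have been pinned down by an \emph{interior} pole-cancellation condition, yet the displayed function will equal the Dirichlet solution only if those same $c_j$ are the ones that simultaneously restore single-valuedness and the correct boundary data. I would derive this compatibility from a reciprocity of Riemann bilinear type on the double, pairing the residues of $h_0$ at the Szeg\H o zeros against the conjugate-periods carried by the $\ln|z-b_j|$ terms; once that duality is in hand, the pole-free combination is the honest harmonic extension and agrees with $\psi$ on $b\Omega$, so uniqueness gives the result. The Szeg\H o-kernel manipulations of the second paragraph, though lengthy, are routine given the simply connected case; it is this reconciliation of interior residue data with boundary-period data that I anticipate being the main obstacle.
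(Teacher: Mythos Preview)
Your construction of $h_0$ and $H_0$ and the mechanism you sketch for their extension to the double are exactly what the paper does (the identity you cite is the content of Theorem~6.1 of \cite{B}, which gives $u=f+\overline F$ on $b\Omega$ with $f=P(S_au)/S_a$ and $F=P(L_a\bar u)/L_a$). The difference lies in how the constants $c_j$ and the boundary matching are handled, and here you manufacture a difficulty that the paper avoids entirely.

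The paper does not build the displayed expression first and then verify it solves the Dirichlet problem. It goes the other way: it \emph{cites} the already-established Szeg\H o-projection method for the Dirichlet problem (pp.~53--55 of \cite{B1}, or \cite{B}), which asserts directly that there exist constants $c_j$ such that the Poisson extension of $\psi$ equals $h+\overline H+\sum_j c_j\ln|z-b_j|$, where, setting $\phi=\psi-\sum_j c_j\ln|z-b_j|$, one has $h=P(S_a\phi)/S_a$ and $H=P(L_a\overline\phi)/L_a$; moreover that same cited result says the $c_j$ are determined precisely by the requirement that the apparent poles of $h$ at the zeros of $S_a$ cancel. So the compatibility you worry about---that the pole-cancellation $c_j$ agree with the boundary-matching $c_j$---is the \emph{content} of the cited theorem, not something to be reproved here via a Riemann bilinear pairing. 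Once that is in hand, linearity of $P$ in $\phi$ gives $h=h_0+\sum_j c_jh_j$ with $h_j=-P(S_a\mathcal L_j)/S_a$, $\mathcal L_j=\ln|z-b_j|$, and similarly for $H$.

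You are also vague at the one genuinely new computation: proving that $(h_j'+\tfrac12(z-b_j)^{-1})\,dz$ extends to the double as a meromorphic $1$-form. The paper's device is concrete. Apply the same decomposition theorem to $\mathcal L_j$ to get $h_j=-\ln|z-b_j|+\overline g$ on $b\Omega$ with $g$ holomorphic and smooth to the boundary; then differentiate along the boundary using $\tfrac{d}{dt}u(z(t))=u_z z'(t)+u_{\bar z}\overline{z'(t)}$ and divide by $|z'(t)|$ so that $T(z)=z'(t)/|z'(t)|$ appears. This yields
\[
\Bigl(h_j'(z)+\tfrac12\,\frac{1}{z-b_j}\Bigr)T(z)=\Bigl(\overline{g'(z)}-\tfrac12\,\frac{1}{\bar z-\bar b_j}\Bigr)\overline{T(z)}
\]
on $b\Omega$, which is exactly the statement that the left side, times $dz$, extends meromorphically to the double. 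The same trick handles $H_j$. Your outline would benefit from making this tangential-differentiation step explicit rather than prescribing the extension property as a design requirement.
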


We will explain how Theorem~\ref{thmA} follows from
Theorem~\ref{thmB} after we prove Theorem~\ref{thmB}.

\begin{proof}[Proof of Theorem~\ref{thmB}]
Let $a$ be a point in $\Om$ such that the $n-1$ zeroes of
$S(z,a)$ are distinct and simple.  (That such an $a$ exists is proved in
\cite{Ba}, or see \cite{B1}, p.~106.)  Let $b_1, b_2,\dots,b_{n-1}$
be points, one from the interior of each of the bounded components
of $\C-\Om$.  We will solve the Dirichlet problem with boundary data
$\psi$ by the method described on p.~1367 of \cite{B} (or better,
see pp.~53-55 of \cite{B1}).  There,
it is shown that there are constants $c_1,c_2,\dots,c_{n-1}$ such
that the Poisson extension of $\psi$ is given by
$$h(z)+\overline{H(z)} + \sum_{j=1}^{n-1}c_j\ln|z-b_j|,$$
where, writing $\phi=\psi- \sum_{j=1}^{n-1}c_j\ln|z-b_j|$, we have
$$h=\frac{P(S_a\phi)}{S_a}\quad\text{and}\quad
H=\frac{P(L_a\overline{\phi}\,)}{L_a}.$$
Here, both $h$ and $H$ are holomorphic on $\Om$.
($h$ looks like it might have possible simple poles at the $n-1$ simple
zeroes of $S_a$, but the method ensures that the numerator vanishes
at the zeroes of $S_a(z)$.)
Both $h$ and $H$ extend $C^\infty$ smoothly to the boundary.
Let ${\mathcal L}_j(z)=\ln|z-b_j|$.
We now consider $h$ as a linear combination of
$$h_0=\frac{P(S_a\psi)}{S_a}$$
and the functions
$$h_j=-\frac{P(S_a{\mathcal L}_j)}{S_a}.$$
We now apply Theorem~6.1 of \cite{B} (see also Theorem~14.1 of
\cite{B1} on p.~53) which states that a $C^\infty$ smooth function
$u$ on the boundary can be expressed as $f+\overline{F}$ on the
boundary where
$$f=\frac{P(S_au)}{S_a}$$
is a meromorphic function that extends $C^\infty$-smoothly to the
boundary and
$$F=\frac{P(L_a\bar u)}{L_a}$$
is a holomorphic function that extends $C^\infty$-smoothly to the
boundary.  Hence,
$$h_0=\frac{P(S_a\psi)}{S_a}=
\psi - \frac{\overline{P(L_a\overline{\psi}\,)}}{\overline{L_a}}.$$
Since $\psi$ extends to the double as a meromorphic function without
poles on the boundary curves of $\Om$, it follows that $\psi(z)=\overline{G(z)}$
on the boundary, where $G$ is a meromorphic function on $\Om$ that extends
smoothly to the boundary.  Hence, it follows that $h_0$ has the same boundary
values as a function that extends to $\Om$ as an antimeromorphic function.
Hence $h_0$ extends to the double.

We will now show that the functions $h_j$ have special extension properties, too.
We may reason as above to see that
$$h_j=-\frac{P(S_a{\mathcal L}_j)}{S_a}=-{\mathcal L_j} +
\frac{ \overline{P(L_a \overline{{\mathcal L}_j }\,)}}{\overline{L_a}}.$$
Hence,
$$h_j(z)=-\ln|z-b_j| + \overline{g(z)}$$
on the boundary, where $g$ is a holomorphic function on $\Om$ that extends smoothly
to the boundary.  Notice that if $u$ is a function defined on a neighborhood of the
boundary of $\Om$ and $z(t)$ parametrizes the boundary in the standard sense, then
$$\frac{d}{dt}u(z(t))=\frac{\dee u}{\dee z}\,z'(t) + 
\frac{\dee u}{\dee \bar z}\,\overline{z'(t)}.$$
Apply this idea to both sides of the last formula for $h_j$, divide by $|z'(t)|$,
and note that $z'(t)/|z'(t)|$ is equal to the complex unit tangent
vector function $T(z)$ at $z=z(t)$ to obtain
$$h_j'(z)T(z)=-\frac{1}{2}\frac{T(z)}{z-b_j}
-\frac{1}{2}\frac{\overline{T(z)}}{\bar z-\bar b_j}
+ \overline{g'(z)}\,\overline{T(z)}$$
for $z$ in the boundary.  This shows that
\begin{equation}
\label{A1}
\left(h_j'(z)+\frac{1}{2}\frac{1}{z-b_j}\right)T(z) =
\left(\overline{g'(z)} -\frac{1}{2}\frac{1}{\bar z-\bar b_j}\right)
\overline{T(z)}
\end{equation}
on the boundary.  It follows that
$$\left(h_j'(z)+\frac{1}{2}\frac{1}{z-b_j}\right)\ dz$$
extends to the double of $\Om$ as a meromorphic $1$-form.

Very similar reasoning can be used to handle $H$.  Indeed,
consider $H$ as a linear combination of
$$H_0=\frac{P(L_a\overline{\psi},)}{L_a}$$
and the functions
$$H_j=-\frac{P(L_a{\mathcal L}_j)}{L_a}.$$
Since $L_a$ has a simple pole at $a$ and no zeroes on $\Obar-\{a\}$,
these functions are holomorphic on $\Om$.  Now, we may apply Theorem~6.1
of \cite{B} as we did with $h_0$ to see that
$$H_0=\frac{P(L_a\overline{\psi})}{L_a}=
\overline{\psi} - \frac{\overline{P(S_a\psi)}}{\overline{S_a}}.$$
Since $\psi$ extends to the double as a meromorphic function without
poles on the boundary curves of $\Om$, it also follows that $\psi(z)=G(z)$
on the boundary, where $G$ is a meromorphic function on $\Om$ that extends
smoothly to the boundary.  Hence, it follows that $H_0$ extends 
meromorphically to the double.  Next, we apply the same idea to $H_j$
to see that
$$H_j=-\frac{P(L_a{\mathcal L}_j)}{L_a}=-{\mathcal L}_j +
\frac{ \overline{P(S_a{\mathcal L}_j)}}{\overline{S_a}}.$$
Hence,
$$H_j(z)=-\ln|z-b_j| + \overline{g(z)}$$
where $g$ is a meromorphic function on $\Om$ that extends smoothly to the boundary.
It follows that
$$H_j'(z)T(z)=-\frac{1}{2}\frac{T(z)}{z-b_j}
-\frac{1}{2}\frac{\overline{T(z)}}{\bar z-\bar b_j}
+ \overline{g'(z)}\,\overline{T(z)}$$
for $z$ in the boundary.  This shows that
\begin{equation}
\label{A2}
\left(H_j'(z)+\frac{1}{2}\frac{1}{z-b_j}\right)T(z)=
\left(\overline{g'(z)} -\frac{1}{2}\frac{1}{\bar z-\bar b_j}\right)
\overline{T(z)}
\end{equation}
on the boundary.  It follows that
$$\left(H_j'(z)+\frac{1}{2}\frac{1}{z-b_j}\right)\ dz$$
extends to the double of $\Om$ as a meromorphic $1$-form.
Finally, the last sentence in the theorem about the cancellation of
the poles follows from the proof of the formula for the solution
of the Dirichlet problem given in \cite{B1}.
\end{proof}

Conditions like equations~(\ref{A1}) and~(\ref{A2})
that appear in the proof of Theorem~\ref{thmB} yield that
$$h_j'(z)+\frac{1}{2}\frac{1}{z-b_j}\quad\text{and}\quad
H_j'(z)+\frac{1}{2}\frac{1}{z-b_j}$$
belong to the Class~$\mathcal A$ of \cite{B4}, and are therefore given
by finite linear combinations of functions from a list of simple
domain functions, as shown in \cite{B4}.

\begin{proof}[Proof of Theorem~\ref{thmA}]
Since $\Om$ is a bounded area quadrature domain, we know that the Schwarz
function $S(z)$ for $\Om$ extends meromorphically to $\Om$ and $S(z)=\bar z$
on the boundary.  We also know that the field of meromorphic functions on
the double is generated by the extensions of $z$ and $S(z)$.   It
was shown in \cite{B5} (Theorem~1.5) that $T(z)^2$ extends to the
double as a meromorphic function on such domains.  (If $\Om$
is also a boundary arc length quadrature domain, then $T(z)$ itself
extends to the double as a meromorphic function.)
These simple facts are all that is needed to deduce Theorem~\ref{thmA}
from Theorem~\ref{thmB}.  The conclusions about $h_0$ and $H_0$
follow immediately.  Since $T(z)^2$ extends meromorphically to the double,
it is given by the boundary values of a meromorphic function $\tau$ on
$\Om$ that extends smoothly to the boundary.
Now, noting that $1/T(z)=\overline{T(z)}$, equations~(\ref{A1})
and~(\ref{A2}) can be further manipulated to yield the conclusions in
Theorem~\ref{thmA} about $h_j$ and $H_j$.  Indeed, equation~(\ref{A1})
yields that
$$h_j'(z)=-\frac{1}{2}\frac{1}{\overline{S(z)}-b_j}+
\left(\overline{g'(z)} -\frac{1}{2}\frac{1}{\bar z-\bar b_j}\right)
\overline{\tau(z)}$$
on the boundary, and this shows that $h_j'$ extends meromorphically to
the double.  Consequently, $h_j'$ is a rational combination of $z$ and $S(z)$.
Similarly, $H_j'$ can be shown to satisfy the same conditions.
\end{proof}

Another nice consequence of Theorem~\ref{thmA} is the following
result about the solution to the Dirichlet problem with
rational data on an area quadrature domain without cusps on
the boundary.  The Bergman span associated to a domain is the complex
linear span of functions of $z$ of the form $K^{(m)}(z,w)$
where $w$ are points in the domain, $K^{(0)}(z,w)=K(z,w)$,
and $K^{(m)}(z,w)=(\dee/\dee\bar w)^m K(z,w)$ are
derivatives of the Bergman kernel in the second variable.

\begin{thm}
\label{thm2a}
Suppose that $\Om$ is a bounded $n$-connected area quadrature
domain without cusps on the boundary, and suppose
$\psi(z)=R(z,\bar z)$ is a rational function of $z$ and
$\bar z$ with no singularities on $b\Om\times b\Om$.
The solution to the Dirichlet problem with boundary data
$\psi$ is equal to
$$u=h+\overline{H} +
\sum_{j=1}^{n-1}c_j\ln|z-b_j|,$$
where $h$ and $H$ are holomorphic functions on $\Om$ such that
all their derivatives are algebraic functions that are rational
combinations of $z$ and the Schwarz function $S(z)$ for $\Om$.
Furthermore, $h''$ and $H''$ and all higher
order derivatives are in the Bergman span.  The functions
$\frac{\dee u}{\dee z}$ and $\frac{\dee u}{\dee\bar z}$
are holomorphic and antiholomorphic functions on $\Om$,
respectfully, that extend meromorphically and antimeromorphically
to the double.  All the real partial derivatives of
$u$ are real algebraic functions.  The points $b_j$ are fixed
points in the complement of $\Om$, one in the interior of each
bounded component of $\C-\Om$.
\end{thm}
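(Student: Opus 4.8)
The plan is to read Theorem~\ref{thm2a} off of Theorem~\ref{thmA} by regrouping its conclusion and then extracting the extra information about derivatives. First I would set $h=h_0+\sum_{j=1}^{n-1}c_jh_j$ and $H=H_0+\sum_{j=1}^{n-1}c_jH_j$. By the final assertion of Theorem~\ref{thmA} the constants $c_j$ are chosen precisely so that the poles of $h_0+\sum c_jh_j$ cancel; hence $h$ is holomorphic on $\Om$, and since $H_0$ and each $H_j$ are holomorphic, so is $H$. The solution then has the stated form $u=h+\ol H+\sum_{j=1}^{n-1}c_j\ln|z-b_j|$. To see that every derivative of $h$ and $H$ is a rational combination of $z$ and $S(z)$, I would first note that this class of functions is closed under $d/dz$: since $z$ and $S(z)$ form a primitive pair for the double, the ratio of $1$-forms $dS/dz$ is meromorphic on the double and hence a rational combination of $z$ and $S(z)$, so the chain rule keeps us inside the class. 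Theorem~\ref{thmA} tells us $h'=h_0'+\sum c_jh_j'$ and $H'$ are already such rational combinations, and induction then gives the same for all higher derivatives, which are therefore algebraic.

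Next I would differentiate $u$ directly. Using $\dee(\ln|z-b_j|)/\dee z=\tfrac12(z-b_j)^{-1}$ we get $\dee u/\dee z=h'+\tfrac12\sum_jc_j/(z-b_j)$, a function holomorphic on $\Om$ that, being a rational combination of $z$ and $S(z)$ plus a rational function of $z$, extends meromorphically to the double; symmetrically $\dee u/\dee\bar z=\ol{H'}+\tfrac12\sum_jc_j/(\bar z-\bar b_j)$ is antiholomorphic and extends antimeromorphically. Consequently $u_x=\dee u/\dee z+\dee u/\dee\bar z$ and $u_y=i(\dee u/\dee z-\dee u/\dee\bar z)$ are each the sum of an algebraic and an anti-algebraic function, hence real algebraic, and applying $\dee/\dee z$ and $\dee/\dee\bar z$ repeatedly together with the closure property above shows every real partial derivative of $u$ is real algebraic.

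The real content is the assertion that $h''$, $H''$, and all higher derivatives lie in the Bergman span, and for this I would isolate the lemma: \emph{if $g$ is holomorphic on $\Obar$ and extends meromorphically to the double, then $g'$ lies in the Bergman span.} Granting it, $h'$ is holomorphic on $\Obar$ and, as $h_0'+\sum c_jh_j'$, extends meromorphically to the double, so $g=h'$ qualifies and $h''=(h')'$ is in the span; since $h''$ is again meromorphic on the double and holomorphic on $\Obar$, the lemma applies to it as well, giving $h'''$, and inductively every $h^{(k)}$ with $k\ge2$, and similarly for $H$. It is worth noting why the lemma is not applied to $h'$ itself: $h'$ is the derivative of $h$, which in general does \emph{not} extend to the double because of the logarithmic contributions, so $h'\,dz$ may have nonzero periods around the handles of the double --- this is exactly the reason Theorem~\ref{thm2a} claims only second and higher derivatives.

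The hard part is the lemma, and the plan for it is a Stokes-plus-residues computation. Pairing $g'$ against an arbitrary $\phi$ holomorphic on a neighborhood of $\Obar$, writing the area element as $\tfrac i2\,dz\wedge d\bar z$, and using $\ol{g'}\,d\bar z=d\ol g$ together with $d(\phi\,dz)=0$, Stokes' theorem converts the Bergman inner product into a boundary integral, $\langle\phi,g'\rangle=-\tfrac i2\int_{b\Om}\ol g\,\phi\,dz$ (the constant is immaterial). Because $g$ extends to the double, $\ol g$ agrees on $b\Om$ with the boundary values of a function $Q$ that is meromorphic on $\Om$ and smooth up to $b\Om$, whose poles in $\Om$ are the finitely many reflections of the poles of $g$ on the back side. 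The residue theorem then evaluates $\int_{b\Om}Q\,\phi\,dz$ as a fixed finite linear combination $\sum_{p,m}a_{p,m}\phi^{(m)}(p)$ of values and derivatives of $\phi$ at the poles $p$ of $Q$, with coefficients depending only on $g$. Since $\phi^{(m)}(p)=\langle\phi,K^{(m)}(\cdot,p)\rangle$, this says $\langle\phi,g'\rangle=\langle\phi,\sum_{p,m}\ol{a_{p,m}}K^{(m)}(\cdot,p)\rangle$ for all such $\phi$; as these are dense in the Bergman space and both sides are Bergman functions, $g'=\sum_{p,m}\ol{a_{p,m}}K^{(m)}(\cdot,p)$, a finite combination. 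The step needing care is that $g$ be genuinely holomorphic on $\Obar$, so that $Q\phi$ has no poles on $b\Om$ and every residue is collected from the interior; this is precisely why I feed the lemma $g=h',h'',\dots$ rather than $h$.
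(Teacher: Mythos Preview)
Your proof is correct and follows essentially the same route as the paper's: both regroup the decomposition from Theorem~\ref{thmA} into $u=h+\overline{H}+\sum c_j\ln|z-b_j|$, show that $h'$ and $H'$ extend meromorphically to the double and hence so do all higher derivatives, compute $\partial u/\partial z$ and $\partial u/\partial\bar z$ directly, and then invoke the lemma that the derivative of a function holomorphic on $\Obar$ extending to the double lies in the Bergman span. The only difference is packaging: where the paper cites Theorem~1.4 of \cite{B5} for the closure under differentiation and Lemma~4.1 of \cite{B4a} for the Bergman-span statement, you supply short self-contained arguments (the $dS/dz$ chain-rule observation and the Stokes-plus-residues computation) that amount to reproving those cited results inline.
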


\begin{proof}[Proof of Theorem~\ref{thm2a}]
To prove Theorem~\ref{thm2a}, we need Theorem~1.4 in
\cite{B5}, which states that, if $g$ is a holomorphic
function that extends meromorphically to the double of a
bounded area quadrature domain with no cusps on the boundary,
then $g'$ extends meromorphically to the double.  Hence,
if we express the solution $u$ to the Dirichlet problem as we
did in the proof of Theorem~\ref{thmB} via
$$u=h+\overline{H}+\sum_{j=1}^{n-1}c_j\ln|z-b_j|,$$
then
$$\frac{\dee u}{\dee z} = h' +\sum_{j=1}^{n-1}c_j\frac{1}{2(z-b_j)},$$
which extends to the double because Theorem~\ref{thmB} plus
Theorem~1.4 in \cite{B5} yield that
$$h'=h_0'+\sum_{j=1}^{n-1}c_jh_j'$$
extends to the double (and $1/(z-b_j)$ extends to the double
because $z$ does).  Similarly,
$H'$ extends meromorphically to the double, and
$$\frac{\dee u}{\dee\bar z} = \overline{H'}
+\sum_{j=1}^{n-1}c_j\frac{1}{2(\bar z-\bar b_j)}$$
extends to the double as an antimeromorphic function.

Note that once it is known that a function $f$ extends
meromorphically to the double, it is seen to be a rational
function of $z$ and $S(z)$.  Since $S(z)$ extends to the
double, so does $S'(z)$.  Consequently, $S'(z)$ is a
rational function of $z$ and $S(z)$, and it follows
inductively that all the derivatives of $f$ extend to
the double as rational combinations of $z$ and $S(z)$.

We now combine Theorem~\ref{thm2a} with Theorem~1.4 in
\cite{B5} and Lemma~4.1 in \cite{B4a} to complete the proof.
Lemma~4.1 in \cite{B4a} states that, if $g$ is a holomorphic
function that extends meromorphically to the double of a
bounded smoothly bounded finitely connected domain with no
poles on the boundary, then $g'$ is in the Bergman span of
the domain.  Hence, if we differentiate a second time,
we see that $h''$ and $H''$ belong to the Bergman span, and
we may repeat this process to see that all higher order
derivatives also belong to the Bergman span.
\end{proof}

The arguments leading to the conclusions of Theorems~\ref{thmB} and~\ref{thmA}
can be repeated using the harmonic measure functions $\omega_j(z)$ in place
of $\ln|z-b_j|$ to obtain similar results.  If $\gamma_j$ is one of the
$n-1$ boundary curves of $\Om$ bounding one of the bounded components of
$\C-\Om$, then $\omega_j$ is the solution to the Dirichlet problem with
boundary values equal to one on $\gamma_j$ and equal to zero on the other
boundary curves.  The Dirichlet problem can be solved in a manner very
similar to the Szeg\H o projection method used above using the $\omega_j$
in place of $\ln|z-b_j|$.  Indeed, this method was developed in \cite{B}
and \cite{Ba} and is described on pp.~89-91 of \cite{B1}.  The proofs of
Theorems~\ref{thmB} and~\ref{thmA} can be repeated line for line and at
the point where equations~(\ref{A1}) and~(\ref{A2}) come into play, the
following argument is invoked.  The classical functions $F_j'$ are
defined via $F_j'(z) = 2(\dee/\dee z)\omega_j(z)$.  (They are
holomorphic on $\Om$, but they are not the derivative of a holomorphic
function on $\Om$ in spite of that prime in the notation.  They are
locally the derivative of a holomorphic function with real part
equal to $\omega_j$.)  In the proof, where the formula
$$h_j(z)=-\omega_j(z) + \overline{g(z)}$$
is obtained, differentiate along the boundary as before to obtain
$$h_j'(z)T(z)=-\frac{1}{2}F_j'(z)T(z)
-\frac{1}{2}\overline{F_j'(z)T(z)}
+ \overline{g'(z)}\,\overline{T(z)}.$$
This shows that $(h_j'+\frac{1}{2}F_j')dz$ extends as a meromorphic $1$-form
to the double.  If $\Om$ is an area quadrature domain, then the
functions $F_j'$ extend meromorphically to the double of $\Om$ (see
Theorem~1.1 of \cite{B5}).  The rest of the proof follows smoothly
after this point.  It is worth stating the resulting theorem for
area quadrature domains here.

\begin{thm}
\label{thm3}
Suppose that $\Om$ is a bounded $n$-connected area quadrature
domain without cusps in the boundary, and suppose
$\psi(z)=R(z,\bar z)$ is a rational function of $z$ and
$\bar z$ with no singularities on $b\Om\times b\Om$.
The solution to the Dirichlet problem with boundary data
$\psi$ is equal to 
$$h_0+\overline{H_0} +
\sum_{j=1}^{n-1}c_j\left(h_j+\overline{H_j}+\omega_j\right),$$
where $h_0$ and $H_0$ are algebraic meromorphic functions on $\Om$
that are rational combinations of $z$ and the Schwarz function
$S(z)$ for $\Om$.  The $c_j$ are complex constants.
The functions $h_j$ and $H_j$, $j=1,\dots,n-1$ are meromorphic
functions on $\Om$ whose derivatives are algebraic functions
that extend meromorphically to the double of $\Om$, and hence
$h_j'$ and $H_j'$ are rational combinations of $z$ and $S(z)$.
The functions $H_j$, $j=0,1,\dots,n-1$ are holomorphic on $\Om$,
but the $h_j$ may have poles at $n-1$ points in $\Om$ specified
in the proof above.  The constants $c_j$ are determined via the
condition that the poles on $\Om$ should cancel so that
$$h_0 + \sum_{j=1}^{n-1}c_j h_j$$
is holomorphic on $\Om$.
\end{thm}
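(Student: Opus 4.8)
The plan is to rerun the proofs of Theorems~\ref{thmB} and~\ref{thmA} essentially verbatim, replacing the role of $\ln|z-b_j|$ by the harmonic measure $\omega_j$ throughout, and then to supply the single new fact that makes the substitution legitimate. First I would solve the Dirichlet problem by the Szeg\H o projection method of \cite{B} and \cite{Ba} (described on pp.~89--91 of \cite{B1}), which produces constants $c_1,\dots,c_{n-1}$ so that the Poisson extension of $\psi$ equals $h+\overline{H}+\sum_{j=1}^{n-1}c_j\omega_j$, where, with $\phi=\psi-\sum_j c_j\omega_j$, one has $h=P(S_a\phi)/S_a$ and $H=P(L_a\overline{\phi})/L_a$. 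By linearity I would split these as $h=h_0+\sum_j c_j h_j$ and $H=H_0+\sum_j c_j H_j$, where $h_0=P(S_a\psi)/S_a$, $h_j=-P(S_a\omega_j)/S_a$, $H_0=P(L_a\overline{\psi})/L_a$, and $H_j=-P(L_a\omega_j)/L_a$ (using that $\omega_j$ is real); the $H_j$ are holomorphic on $\Om$ because $L_a$ is zero-free off its single pole at $a$, and the pole-cancellation statement for $h_0+\sum_j c_j h_j$ is inherited from the construction in \cite{B1}.

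The claims about $h_0$ and $H_0$ are then identical to those in Theorem~\ref{thmA}: since $\Om$ is an area quadrature domain, $\psi=R(z,\bar z)=R(z,S(z))$ extends meromorphically to the double, and Theorem~6.1 of \cite{B} shows $h_0$ and $H_0$ do as well, hence are rational combinations of $z$ and $S(z)$, and so algebraic.

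The substance lies in $h_j$ and $H_j$. Applying Theorem~6.1 of \cite{B} gives $h_j(z)=-\omega_j(z)+\overline{g(z)}$ on $b\Om$ for some $g$ holomorphic and smooth up to the boundary. Differentiating along the boundary, dividing by $|z'(t)|$, and using $z'(t)/|z'(t)|=T(z)$ together with $F_j'(z)=2(\dee/\dee z)\omega_j(z)$ produces the exact analogue of~(\ref{A1}),
$$\left(h_j'(z)+\tfrac12 F_j'(z)\right)T(z)=\left(\overline{g'(z)}-\tfrac12\overline{F_j'(z)}\right)\overline{T(z)},$$
so that $(h_j'+\tfrac12 F_j')\,dz$ extends to the double as a meromorphic $1$-form. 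To promote this to a statement about $h_j'$ alone, I would use $1/T(z)=\overline{T(z)}$ together with the fact (Theorem~1.5 of \cite{B5}) that $T(z)^2$ extends meromorphically to the double, rewriting the identity as $h_j'+\tfrac12 F_j'=(\overline{g'}-\tfrac12\overline{F_j'})\,\overline{T^2}$ on $b\Om$; the right side is then antimeromorphic, so $h_j'+\tfrac12 F_j'$ extends meromorphically to the double. The same computation handles $H_j'$.

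The step I expect to be the genuine obstacle, and the one place where the \emph{area quadrature} hypothesis is indispensable, is passing from $h_j'+\tfrac12 F_j'$ back to $h_j'$. This requires $F_j'$ itself to extend meromorphically to the double, which is precisely Theorem~1.1 of \cite{B5}. Granting that, subtracting $\tfrac12 F_j'$ shows $h_j'$ (and likewise $H_j'$) extends meromorphically, hence is a rational combination of $z$ and $S(z)$. The determination of the $c_j$ by pole cancellation and the $C^\infty$ smoothness up to $b\Om$ then carry over word for word from Theorems~\ref{thmB} and~\ref{thmA}.
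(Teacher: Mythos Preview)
Your proposal is correct and follows essentially the same route as the paper: rerun the proofs of Theorems~\ref{thmB} and~\ref{thmA} with $\omega_j$ in place of $\ln|z-b_j|$, using the Szeg\H o projection method of \cite{B}, \cite{Ba} (pp.~89--91 of \cite{B1}) to obtain the decomposition, differentiating $h_j=-\omega_j+\overline{g}$ along the boundary to get $(h_j'+\tfrac12 F_j')T=(\overline{g'}-\tfrac12\overline{F_j'})\,\overline{T}$, and then invoking Theorem~1.1 of \cite{B5} (that $F_j'$ extends meromorphically to the double on area quadrature domains) together with the $T^2$ extension from Theorem~1.5 of \cite{B5} to conclude that $h_j'$ itself extends. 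You have correctly isolated Theorem~1.1 of \cite{B5} as the one genuinely new ingredient needed for the $\omega_j$ version.
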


The theorem analogous to Theorem~\ref{thm3} corresponding to
Theorem~\ref{thmB} with $\omega_j$ in place of $\ln|z-b_j|$ also
holds.

We have generalized the first alternate proof of Ebenfelt's theorem
to the multiply connected setting.  Next, we generalize the second
alternate proof.  In \cite{B1a}, \cite{B2}, and \cite{B4}, the derivative
$G^{(1)}(z,w)=(\dee/\dee w)G(z,w)$ is expressed in terms of
simpler domain functions of one variable in various ways.  The
virtue of this second proof allows us to apply these results
to obtain information about the form of the solution to the Dirichlet
problem with rational data in double quadrature domains.

Suppose that $\Om$ is a bounded $n$-connected double quadrature
domain.  Theorem~1.1 of \cite{B2} states that
there are points $A_1$, $A_2$, $A_3$  and $w_k$, $k=1,2,\dots,n-1$
in $\Om$ such that
\begin{equation}
\label{green1}
G^{(1)}(z,w)=r_0(z,w)+\sum_{k=1}^{n-1} \rho_k(z)r_k(w)
\end{equation}
where $r_0(z,w)$ is a rational combination of
$S(w,A_j)$, $S(z,A_j)$, and the conjugate of $S(z,A_j)$
for $j=1,2,3$, $r_k(w)$ is a rational combination of
$S(w,A_j)$ for $j=1,2,3$, and $\rho_k$ is given by
$$\rho_k(z)=G^{(1)}(z,w_k)-\frac{S(z,w_k)L(z,w_k)}{S(w_k,w_k)}.$$
On a double quadrature domain, the Szeg\H o kernel $S(z,b)$ extends
meromorphically to the double in $z$ for each fixed $b\in\Om$.
The same holds for the Garabedian kernel $L(z,b)$.  The functions
$G^{(1)}(z,w_k)$ are solutions to the Dirichlet problem with
boundary data $(1/2)(z-w_k)^{-1}$.  Consequently, we may apply
Theorem~\ref{thmA} to express $G^{(1)}(z,w_k)$ in elementary
terms.  When we collect everything together, we find that
\begin{equation}
\label{green2}
G^{(1)}(z,w)=R_0(z,w)+\sum_{k=1}^{n-1} \sigma_k(z)R_k(w)
\end{equation}
where $R_0(z,w)$ is a rational combination of
$w$, $S(w)$, $z$, $\bar z$, $S(z)$, and $\overline{S(z)}$,
$R_k(w)$ is a rational combination of
$w$ and $S(w)$, and $\sigma_k(z)$ is equal to the function
$(h_k(z)+\overline{H_k(z)}+\ln|z-b_k|)$ appearing in Theorem~\ref{thmA}.
Higher order derivatives $G^{(m)}(z,w)$ have the same form
since $S'(w)$ also extends to the double, and is therefore a rational
function of $w$ and $S(w)$.  Hence, we could use the idea of the
second proof of Ebenfelt's theorem to first extend $R(z,\bar z)$
as a meromorphic function, and then to subtract off a linear
combination of terms $G^{(m)}(z,w_j)$ to remove the poles to
obtain the solution of the Dirichlet problem with boundary
data $R(z,\bar z)$.  Although formula~(\ref{green2}) is new,
the form of the solution to the Dirichlet problem obtained is
very similar to that given by Theorem~\ref{thmA}.

We remark here that if we use Theorem~8.1 of \cite{B1a} to express
the derivatives of the Green's function instead, we obtain the
following theorem, which does seem new.

\begin{thm}
\label{thm3aa}
If $\Om$ is a bounded double quadrature domain, then the
solution to the Dirichlet problem with rational boundary
data is equal to
$$\Phi + \sum_{k=1}^{n-1}c_k (\lambda_k-\omega_k)$$
where  $\Phi$ is a function in $C^\infty(\Obar)$
that is a rational combination of $z$, the Schwarz function
for $\Om$, and the conjugates of these two functions.  Here,
$$\lambda_k(z)=\frac{1}{S(z,z)}\int_{w\in\gamma_k}|S(z,w)|^2 ds$$
are the non-harmonic measure functions studied in \cite{B1a}.
\end{thm}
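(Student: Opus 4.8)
The plan is to run the second (Green's-function) proof of Ebenfelt's theorem in the multiply connected double quadrature setting, exactly as in the derivation of~(\ref{green2}), but to feed in Theorem~8.1 of \cite{B1a} in place of Theorem~1.1 of \cite{B2} when expanding the derivatives of the Green's function. I will use the standing facts that $S(z)$ extends meromorphically to the double with $S(z)=\bar z$ on $b\Om$, that the Szeg\H o and Garabedian kernels $S(z,b)$ and $L(z,b)$ extend meromorphically to the double in $z$ for each fixed $b\in\Om$, and that $S'(z)$ is a rational combination of $z$ and $S(z)$, so that every function extending meromorphically to the double has all of its derivatives rational in $z$ and $S(z)$.

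First I would write the boundary data as $R(z,S(z))$, a meromorphic function on $\Om$ that is pole-free on $b\Om$, and, following the second proof, subtract a finite linear combination $\sum_{j,m} c_{jm}\,G^{(m)}(z,w_j)$ to cancel its interior poles. Since each $G^{(m)}(\cdot,w_j)$ is harmonic on $\Om$, vanishes on $b\Om$, and carries only a pole of the form $c/(z-w_j)^m$, the corrected function is harmonic, smooth up to $b\Om$, and equal to $\psi$ there, and is therefore the Dirichlet solution.

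The main step is the expansion of the $G^{(m)}(z,w_j)$. Theorem~8.1 of \cite{B1a} furnishes a representation of $G^{(1)}(z,w)$ of the same shape as~(\ref{green2}) in which the non-elementary part is carried by the non-harmonic measure combinations $\lambda_k(z)-\omega_k(z)$ in place of the functions $\sigma_k$, while the accompanying coefficient functions and the remaining term are rational in $z$, $\bar z$, $S(z)$, $\overline{S(z)}$, $w$, and $S(w)$. Because $\lambda_k-\omega_k$ depends on $z$ alone, differentiating in $w$ to pass from $G^{(1)}$ to $G^{(m)}$ affects only the coefficient functions, which remain rational in $w$ and $S(w)$ since $S'(w)$ is rational in $w$ and $S(w)$. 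Setting $w=w_j$ and summing, every rational-in-$(z,S(z))$ piece---including $R(z,S(z))$ itself---assembles into a single function $\Phi$, and the $z$-functions $\lambda_k-\omega_k$ collect, with constant coefficients $c_k$, into $\sum_{k=1}^{n-1}c_k(\lambda_k-\omega_k)$. I would also record that $\lambda_k$ and $\omega_k$ have the same boundary values, namely $1$ on $\gamma_k$ and $0$ on the remaining curves---the former from the reproducing identity $\int_{b\Om}|S(z,w)|^2\,ds=S(z,z)$ together with the concentration of the Szeg\H o kernel near the boundary diagonal---so that $\lambda_k-\omega_k$ vanishes on $b\Om$.

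I expect the main obstacle to be confirming that $\Phi$ lies in $C^\infty(\Obar)$ and is a globally defined rational combination of $z$, $S(z)$, and their conjugates. One must check that the constants $c_{jm}$ chosen to remove the poles of $R(z,S(z))$ simultaneously cancel the poles contributed by the rational parts of the expanded $G^{(m)}$, so that no interior singularity survives in $\Phi$, and that the harmonic measures $\omega_k$ enter with precisely the coefficients $c_k$ forced by that cancellation. Once this bookkeeping is settled, smoothness of $\Phi$ up to $b\Om$ follows from the smoothness of $\psi$ on $b\Om$ and of each $G^{(m)}(\cdot,w_j)$ away from $w_j$, and the stated form of the solution follows.
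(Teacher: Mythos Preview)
Your proposal is correct and follows precisely the route the paper indicates: the paper's argument for this theorem is the one-sentence remark that one repeats the Green's-function method used to obtain~(\ref{green2}) but substitutes Theorem~8.1 of \cite{B1a} for Theorem~1.1 of \cite{B2}, so that the non-elementary $z$-dependence is carried by $\lambda_k-\omega_k$ rather than by the $\sigma_k$. Your write-up is in fact more detailed than the paper's sketch; the pole-cancellation bookkeeping you flag as the main obstacle is exactly the point that must be checked, and it is handled by the same mechanism as in the second proof of Ebenfelt's theorem together with the fact that the rational (in $z$, $S(z)$, $\bar z$, $\overline{S(z)}$) part of each $G^{(m)}(z,w_j)$ carries the full principal part at $w_j$ while $\lambda_k-\omega_k$ is smooth on $\Obar$.
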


The Poisson kernel $p(z,w)$ associated to a bounded smoothly bounded
domain $\Om$ is related to the classical Green's function via
$$p(z,w)=-\frac{i}{\pi}\frac{\dee G}{\dee w}(z,w)T(w)$$
where $z\in\Om$ and $w\in b\Om$.  On a double quadrature domain,
since $T(w)$ extends meromorphically to the double, $T(w)$ is therefore
a rational function of $w$ and $\bar w=S(w)$, and it follows from
equation~(\ref{green2}) that the Poisson kernel associated to
the domain is equal to an expression analogous to the right hand
side of equation~(\ref{green2}) as follows.

\begin{thm}
\label{thm3a}
The Poisson kernel associated to a double quadrature domain
$\Om$ is given by
$$p(z,w)=
Q_0(z,w)+\sum_{k=1}^{n-1} \sigma_k(z)Q_k(w),$$
for $z\in\Om$ and $w\in b\Om$, where
$Q_0(z,w)$ is a rational combination of
$w$, $\bar w$, $z$, $\bar z$, $S(z)$, and $\overline{S(z)}$,
$Q_k(w)$ is a rational combination of
$w$ and $\bar w$, and $\sigma_k(z)$ is equal to the function
$h_k(z)+\overline{H_k(z)}+\ln|z-b_k|$ from Theorem~\ref{thmA},
$h_k'$ and $H_k'$ being rational functions of $z$ and $S(z)$.
\end{thm}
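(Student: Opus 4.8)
The plan is to substitute the structural formula~(\ref{green2}) for $G^{(1)}(z,w)$ directly into the boundary identity
$$p(z,w)=-\frac{i}{\pi}\,G^{(1)}(z,w)\,T(w)$$
recorded just before the statement, and then read off the claimed decomposition by distributing the factor $-\frac{i}{\pi}T(w)$ over the finite sum. Inserting $G^{(1)}(z,w)=R_0(z,w)+\sum_{k=1}^{n-1}\sigma_k(z)R_k(w)$ gives
$$p(z,w)=-\frac{i}{\pi}\,T(w)\,R_0(z,w)+\sum_{k=1}^{n-1}\sigma_k(z)\left(-\frac{i}{\pi}\,T(w)\,R_k(w)\right),$$
so the natural definitions are $Q_0(z,w)=-\frac{i}{\pi}T(w)R_0(z,w)$ and $Q_k(w)=-\frac{i}{\pi}T(w)R_k(w)$, with the functions $\sigma_k(z)$ inherited unchanged from Theorem~\ref{thmA} through equation~(\ref{green2}).

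It then remains only to check that $Q_0$ and $Q_k$ have the asserted rational form, and the decisive point is that we are evaluating at $w\in b\Om$. On the boundary the Schwarz function satisfies $S(w)=\bar w$, so the dependence of $R_0(z,w)$ and $R_k(w)$ on $S(w)$ can be rewritten as dependence on $\bar w$. First I would invoke the fact recalled before the statement---that on a double quadrature domain $T(w)$ extends meromorphically to the double and is hence a rational combination of $w$ and $S(w)$, i.e.\ a rational function of $w$ and $\bar w$ for $w\in b\Om$. Multiplying this rational-in-$(w,\bar w)$ factor against $R_k(w)$, which is rational in $w$ and $S(w)=\bar w$, shows that $Q_k(w)$ is a rational function of $w$ and $\bar w$; applying the same multiplication to $R_0(z,w)$, whose dependence is rational in $w,S(w),z,\bar z,S(z),\overline{S(z)}$, produces $Q_0(z,w)$ rational in $w,\bar w,z,\bar z,S(z),\overline{S(z)}$, exactly as claimed. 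The assertion that $h_k'$ and $H_k'$ are rational in $z$ and $S(z)$ is carried over verbatim from Theorem~\ref{thmA}.

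Because every ingredient has already been assembled in~(\ref{green2}), in the boundary identity for $p$, and in the meromorphic extendability of $T$ to the double, I do not anticipate a genuine obstacle; the content is essentially bookkeeping. The one place demanding care is the substitution $S(w)\mapsto\bar w$: this identity holds only on $b\Om$, so $w$ must be kept restricted to the boundary throughout, and one should not read the resulting expressions as rational functions of $w$ and $\bar w$ off the boundary. With that caveat in force, the decomposition and both rationality statements follow at once.
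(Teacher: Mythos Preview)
Your argument is correct and follows the paper's own route exactly: the paper derives Theorem~\ref{thm3a} by inserting formula~(\ref{green2}) for $G^{(1)}(z,w)$ into the boundary identity $p(z,w)=-\frac{i}{\pi}\,G^{(1)}(z,w)\,T(w)$ and then using that on a double quadrature domain $T(w)$ is a rational function of $w$ and $\bar w=S(w)$. Your explicit identification of $Q_0$ and $Q_k$ and your caveat about the substitution $S(w)\mapsto\bar w$ being valid only for $w\in b\Om$ are precisely the points the paper relies on.
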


If we use formula~(7.5) from \cite{B1a} to express the Poisson kernel
associated to a double quadrature domain instead, we may deduce that
$p(z,w)$ is a rational combination of
$z$, $S(z)$, $\bar z$, $\overline{S(z)}$,
$w$, and $\bar w$, plus a sum of the form
$$\sum_{k=1}^{n-1}(\lambda_k(z)-\omega_k(z))\mu_k(w),$$
where the functions $\mu_k$ are rational functions of $w$ and $\bar w$.

We have derived formulas for derivatives of the Green's function
in quadrature domains.  It is also possible to deduce formulas for
the Green's function itself.  Indeed, if $\Om$ is a bounded area
quadrature domain without cusps in the boundary, then the Green's
function associated to $\Om$ for a point $b_0\in\Om$ is given by
$$G(z,b_0)=-\ln|z-b_0| + u$$
where $u$ solves the Dirichlet problem on $\Om$ with boundary
data $\ln|z-b_0|$.  We may express
$u$ as we did in the proof of Theorem~\ref{thmA} to write
$$u = h(z)+\overline{H(z)} + \sum_{j=1}^{n-1}c_j\ln|z-b_j|,$$
where, writing $\phi=\ln|z-b_0|- \sum_{j=1}^{n-1}c_j\ln|z-b_j|$, we have
$$h=\frac{P(S_a\phi)}{S_a}\quad\text{and}\quad
H=\frac{P(L_a\overline{\phi}\,)}{L_a}.$$
Let ${\mathcal L}_j(z)=\ln|z-b_j|$ for $j=0,1,2,\dots,n-1$.
We now consider $h$ as a linear combination of
$$h_0=\frac{P(S_a{\mathcal L}_0)}{S_a}$$
and the functions
$$h_j=-\frac{P(S_a{\mathcal L}_j)}{S_a}.$$
The same argument used above in the proof of Theorem~\ref{thmA} yields that
$$h_0=\frac{P(S_a{\mathcal L}_0)}{S_a}=
{\mathcal L}_0 - \frac{\overline{P(L_a\overline{{\mathcal L}_0})}}{\overline{L_a}}.$$
Hence,
$$h_0=
{\mathcal L}_0 + \overline{g},$$
where $g$ is holomorphic on $\Om$.
We may now differentiate this formula along the boundary to obtain
$$h_0'(z)T(z)=\frac{1}{2}\frac{T(z)}{z-b_0}+
\frac{1}{2}\frac{\overline{T(z)}}{\bar z-\bar b_0}
+ \overline{g'(z)}\,\overline{T(z)}$$
for $z$ in the boundary.  This shows that
$$\left(h_0'(z)-\frac{1}{2}\frac{1}{z-b_0}\right)T(z) =
\left(\overline{g'(z)} +\frac{1}{2}\frac{\overline{1}}{\bar z-\bar b_0}\right)
\overline{T(z)}$$
on the boundary.  We can now use the argument that we used in the proof
of Theorem~\ref{thmA} to see that $h_0'$ extends to the double as a meromorphic
function.  Similar reasoning can be applied to $H$.  The bottom line is that
$$G(z,b_0)=-\ln|z-b_0| + h_0(z)+\overline{H_0(z)}+
\sum_{j=1}^{n-1}c_j\left(h_j(z)+\overline{H_j(z)}+\ln|z-b_j|\right),$$
where the $h_j$ and $H_j$ are such that $h_j'$ and $H_j'$ are
algebraic functions of $z$ which are rational combinations of $z$ and
$S(z)$ for $j=0,1,2,\dots,n-1$.  It would be interesting to figure out
the dependence of the functions on the right hand side of this formula
in $b_0$, but the results of \cite{B6a} lead one to believe that it
might be rather messy.

We close this section by remarking that the harmonic
measure functions $\omega_k$ associated to a bounded
area quadrature domain $\Om$ without cusps in the boundary
can be expressed in terms of simpler functions.
There exist real constants $c_j$ so that
$$u:=\omega_k-\sum_{j=1}^{n-1}c_j\ln|z-b_j|$$
is equal to the real part of a holomorphic function $G$
on $\Om$ (i.e., so that the periods vanish).
Note that $G'=2\dee u/\dee z$ is equal to $F_k'$ plus
a linear combination of $1/(z-b_j)$.
Since $F_k'$ and all the $1/(z-b_j)$ extend meromorphically
to the double on area quadrature domains, it follows that
$G'$ extends meromorphically to the double.  Hence, we
may state the following theorem.

\begin{thm}
\label{thmomega}
The harmonic measure functions associated to a bounded area
quadrature domain without cusps in the boundary can be expressed
via
$$\omega_k= \text{\rm Re\ }G_k+\sum_{j=1}^{n-1}c_{kj}\ln|z-b_j|$$
where $G_k$ are holomorphic functions on $\Om$
such that $G_k'$ extends meromorphically to the double.  Consequently,
$G_k'$ are algebraic functions that are rational
combinations of $z$ and the Schwarz function.  The points
$b_j$ are points in the complement of $\Om$, one in the
interior of each bounded component of the complement, and
$[c_{kj}]$ is a non-singular matrix of real constants.
\end{thm}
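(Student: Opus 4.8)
The plan is to produce each $G_k$ by subtracting a suitable combination of the logarithms $\ln|z-b_j|$ from $\omega_k$ so as to annihilate the periods of the (multivalued) harmonic conjugate of $\omega_k$, and then to read off the extension property of $G_k'$ from the already-established behavior of $F_k'$. The non-singularity of $[c_{kj}]$ will drop out of the very same period computation, so I organize everything around a single period map.

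First I would fix the points $b_j$, one in each bounded component of $\C-\Om$, and let $\gamma_1,\dots,\gamma_{n-1}$ be the bounded boundary curves with $\gamma_j$ surrounding $b_j$. For a harmonic function $v$ on $\Om$, write $\Pi_l(v)$ for the period of a local harmonic conjugate of $v$ around $\gamma_l$, equivalently $\Pi_l(v)=\int_{\gamma_l}(\dee v/\dee n)\,ds$. Only $n-1$ of the $n$ boundary periods are independent because they sum to zero, so I record $\Pi_1,\dots,\Pi_{n-1}$. The key computation is that $\Pi_l(\ln|z-b_j|)=2\pi\,\delta_{jl}$, since the conjugate of $\ln|z-b_j|$ is $\arg(z-b_j)$, whose increment around $\gamma_l$ is $2\pi$ times the winding number of $\gamma_l$ about $b_j$, and the holes are disjoint so $\gamma_l$ encloses $b_j$ only when $l=j$.

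Next I would solve for the constants. Demanding that $u=\omega_k-\sum_{j=1}^{n-1}c_{kj}\ln|z-b_j|$ have vanishing periods forces $\sum_{j}c_{kj}\,\Pi_l(\ln|z-b_j|)=\Pi_l(\omega_k)$, and by the computation above this collapses to $c_{kl}=(1/2\pi)\Pi_l(\omega_k)$. Thus $[c_{kj}]$ is a nonzero scalar multiple of the classical period matrix $[\Pi_l(\omega_k)]$ of the harmonic measures, which is symmetric and non-singular (indeed sign-definite). This one step simultaneously pins down the $c_{kj}$, shows that $u$ is the real part of a single-valued holomorphic function $G_k$ on $\Om$ (the vanishing periods being exactly the obstruction to global single-valuedness of the conjugate), and yields the asserted non-singularity of $[c_{kj}]$.

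It remains to establish the extension statement. Differentiating $u=\mathrm{Re}\,G_k$ gives $G_k'=2(\dee/\dee z)u=F_k'-\sum_{j=1}^{n-1}c_{kj}\,(z-b_j)^{-1}$, using $F_k'=2(\dee/\dee z)\omega_k$ and $2(\dee/\dee z)\ln|z-b_j|=(z-b_j)^{-1}$. On an area quadrature domain, Theorem~1.1 of \cite{B5} gives that $F_k'$ extends meromorphically to the double, while each $(z-b_j)^{-1}$ extends because $z$ does; hence $G_k'$ extends meromorphically to the double. Since $z$ and $S(z)$ generate the field of meromorphic functions on the double, $G_k'$ is a rational combination of $z$ and $S(z)$, and since $S(z)$ is algebraic, so is $G_k'$. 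The only nonroutine ingredient is the non-singularity of the harmonic-measure period matrix; everything else is a direct computation once the period map $\Pi_l$ is in place, so I expect that to be the main point requiring an appeal to the classical theory of multiply connected domains.
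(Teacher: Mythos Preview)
Your argument is correct and follows essentially the same route as the paper: choose the $c_{kj}$ to kill the periods of $\omega_k-\sum_j c_{kj}\ln|z-b_j|$, write the result as $\text{Re}\,G_k$, compute $G_k'=F_k'-\sum_j c_{kj}(z-b_j)^{-1}$, and invoke Theorem~1.1 of \cite{B5} together with the fact that $z$ extends to the double. Your version is in fact more complete than the paper's, which simply asserts the existence of the $c_{kj}$ and the non-singularity of $[c_{kj}]$; your explicit computation $\Pi_l(\ln|z-b_j|)=\pm 2\pi\,\delta_{jl}$ identifies $[c_{kj}]$ (up to a scalar) with the classical period matrix of the harmonic measures and thereby supplies the non-singularity that the paper leaves unproved.
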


\section{Mappings between quadrature domains}
\label{secM}

Bj\"orn Gustafsson \cite{G} proved that if $\Om_1$ is a bounded
finitely connected domain bounded by Jordan curves and
$f:\Om_1\to\Om_2$ is a biholomorphic mapping, then $\Om_2$
is an area quadrature domain if and only if $f$ extends
meromorphically to the double of $\Om_1$.
If $f$ is a biholomorphic mapping between two area
quadrature domains, this result can be applied in both
directions.  We also know from \cite{G} that finitely connected
area quadrature domains have Schwarz functions that
extend meromorphically to the domain, and the field of
meromorphic functions on the double is generated by the
extensions of $z$ and the Schwarz function.  Hence, it
follows that $f$ is a rational combination of $z$ and the
Schwarz function $S_1(z)$ for $\Om_1$.  Since $S_1(z)=\bar z$
on the boundary, it follows that $f$ is a rational
function of $z$ and $\bar z$ when restricted to the boundary.
Since $S_1(z)$ is algebraic, $f$ is algebraic.  It is shown in \cite{B5}
(Theorem~1.4) that, on an area quadrature domain, if $H$ extends
to the double, then so does $H'$.  Hence, $f'$ is also a rational
function of $z$ and $S_1(z)$ on $\Om$, and a rational function of
$z$ and $\bar z$ when restricted to the boundary.  In fact, all
the derivatives of $f$ have this property.

We may conclude from this discussion that biholomorphic
mappings between area quadrature domains are rather like
automorphisms of the unit disc (where $S(z)=1/z$).

Gustafsson's result about biholomorphic maps was generalized
to proper holomorphic mappings in \cite{B7} (Theorems~ 1.3 and
p.~168).  Hence, exactly the same conclusions can be drawn about
proper holomorphic mappings between bounded area quadrature domains.
Proper holomorphic mappings between double quadrature domains
are rather like finite Blaschke products.

We next consider a biholomorphic mapping $f:\Om_1\to\Om_2$
between bounded double quadrature domains.  Such a map would have
all of the properties mentioned above, plus the property proved in
\cite{BGS} that $\sqrt{f'}$ belongs to the Szeg\H o span of
$\Om_1$.  Since the Szeg\H o kernel $S(z,a)$ and its derivatives
$(\dee/\dee\bar a)^mS(z,a)$ extend meromorphically
to the double in $z$ for each fixed $a$ on double quadrature domains
(see \cite{BGS}), it follows that $\sqrt{f'}$  is a rational function
of $z$ and $S_1(z)$.  Consequently, $f'$ is the square of such a
function and $f'$ is the square of a rational function of $z$ and
$\bar z$ when restricted to the boundary.

\section{Classes of domains with simple kernels}
\label{secC}

The results of section~\ref{secD} can be pulled back via
conformal mappings.  When the conformal mappings have additional
properties, more detailed information can be gleaned.

Call a bounded finitely connected domain $\Om$ an {\it algebraic\/}
domain if there is a complex algebraic proper holomorphic mapping of $\Om$
onto the unit disc.  This class of domains was studied in \cite{B1a}.
There, it is shown that these domains are characterized by having
algebraic Bergman kernels, or Szeg\H o kernels, or Ahlfors maps.
A list of equivalent ways to characterize the domains is given in
\cite{B1a} (see also \cite{B4}).

Call a bounded finitely connected domain $\Om$ a {\it Briot-Bouquet\/}
domain if there is a proper holomorphic mapping $f$ of $\Om$
onto the unit disc such that $f'$ and $f$ are algebraically dependent,
i.e., such that a polynomial of two complex variables $P(z,w)$ exists
such that $P(f',f)\equiv0$ on $\Om$.  It was shown in \cite{B4} that
domains in this class are characterized by having Bergman
kernels or Szeg\H o kernels generated by only {\it two\/} holomorphic
functions of one complex variable.  A list of equivalent ways to
characterize the domains is given in \cite{B4}.  One of the equivalent
conditions is that Ahlfors maps associated to $\Om$ are Briot-Bouquet
functions.

The next theorem will allow us to harvest the consequences of the
theorems of \S\ref{secD} for algebraic and Briot-Bouquet domains.

\begin{thm}
\label{thmbb}
Suppose $f:\Om_1\to\Om_2$ is a biholomorphic mapping between
bounded domains bounded by finitely many non-intersecting $C^\infty$
smooth curves.  Suppose further that $\Om_2$ is an area quadrature
domain.  Let $S_2(z)$ denote the Schwarz function for $\Om_2$.

If $\Om_1$ is an algebraic domain, then $f$ is algebraic.
Consequently, so is $S_2\circ f$.

If $\Om_1$ is a Briot-Bouquet domain, then $f$ is Briot-Bouquet,
and so is $G\circ f$ whenever $G$ is a meromorphic function on
$\Om_2$ that extends meromorphically to the double of $\Om_2$.
In particular, $S_2\circ f$ is Briot-Bouquet.

In both cases, $f$ and $S_2\circ f$ extend meromorphically to
the double of $\Om_1$ and form a primitive pair for the field
of meromorphic functions on the double.
\end{thm}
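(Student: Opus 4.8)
The plan is to carry out all the work on the Schottky double. Since $\Om_2$ is an area quadrature domain, the coordinate function $z$ extends meromorphically to the double $\widehat{\Om}_2$, with back-side values the Schwarz function $S_2$, and $z$ and $S_2$ form a primitive pair for the meromorphic function field of $\widehat{\Om}_2$. As $f$ is a biholomorphism between smoothly bounded domains, it extends to a conformal equivalence $\widehat f\colon\widehat{\Om}_1\to\widehat{\Om}_2$ of the doubles (extend to the back copy by conjugating with the anticonformal involutions). Consequently $f=z\circ\widehat f$ and $S_2\circ f=S_2\circ\widehat f$ are meromorphic on $\widehat{\Om}_1$ and, being the images of a primitive pair under a conformal equivalence, form a primitive pair there. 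This reproves, via Gustafsson's criterion recalled in \S\ref{secM}, that $f$ extends meromorphically to $\widehat{\Om}_1$; it settles the last assertion in both cases; and it shows that $G\circ f$ extends meromorphically to $\widehat{\Om}_1$ for every $G$ extending to the double of $\Om_2$.

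For the algebraic case I would fix the Ahlfors map $g_1$ of $\Om_1$. It is algebraic by hypothesis, hence algebraic over $\C(z)$, and it extends meromorphically to $\widehat{\Om}_1$. Since $\widehat{\Om}_1$ is a compact Riemann surface on which $g_1$ is a nonconstant meromorphic function, every meromorphic function on $\widehat{\Om}_1$ is algebraic over $\C(g_1)$; in particular so is $f$. By transitivity of algebraicity, $f$ is algebraic over $\C(z)$, i.e.\ $f$ is algebraic. Finally $S_2$ is algebraic, being the Schwarz function of an area quadrature domain, so $S_2\circ f$ is algebraic over $\C(f)$ and, again by transitivity, algebraic over $\C(z)$.

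For the Briot--Bouquet case the hypothesis is precisely that $g_1'$ is algebraic over $\C(g_1)$. The key step is that differentiation in $z$ preserves algebraicity over $\C(g_1)$: if $u$ is algebraic over $\C(g_1)$, let $R(g_1,u)\equiv0$ be its minimal polynomial; differentiating gives $R_1(g_1,u)\,g_1'+R_2(g_1,u)\,u'=0$ with $R_2(g_1,u)\ne0$, so $u'=-R_1(g_1,u)\,g_1'/R_2(g_1,u)$ is again algebraic over $\C(g_1)$. Now let $G$ be any function extending to the double of $\Om_2$. By the first paragraph $G\circ f$ extends to $\widehat{\Om}_1$ and is therefore algebraic over $\C(g_1)$, and by the key step so is $(G\circ f)'$. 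Both lie in a field of transcendence degree one over $\C$, so they are algebraically dependent; that is, $G\circ f$ is Briot--Bouquet. Taking for $G$ the coordinate function (which extends because $\Om_2$ is an area quadrature domain) shows $f$ is Briot--Bouquet, and taking $G=S_2$ shows $S_2\circ f$ is Briot--Bouquet.

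I expect the main obstacle to be the passage between the two very different descriptions of these functions. The property ``extends to the double'' is a statement on the compact surface $\widehat{\Om}_1$ and immediately places a function inside the algebraic closure of $\C(g_1)$, whereas ``algebraic'' and ``Briot--Bouquet'' are statements about the dependence of the function on the single variable $z$. In the algebraic case the bridge is just transitivity of algebraicity. In the Briot--Bouquet case the delicate point is that the $z$-derivative of a function algebraic over $\C(g_1)$ remains algebraic over $\C(g_1)$; this fails for a general domain and succeeds here only because the defining Briot--Bouquet property keeps $g_1'$ inside the algebraic closure of $\C(g_1)$, so that no new transcendental is introduced and everything stays within one transcendence-degree-one field.
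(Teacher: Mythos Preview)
Your proof is correct and in several respects cleaner than the paper's.  You begin by observing that the biholomorphism $f$ extends to a conformal equivalence $\widehat f$ of the Schottky doubles, and then pull back the primitive pair $(z,S_2)$ through $\widehat f$; this settles the last sentence of the theorem directly, whereas the paper's own proof does not spell out the primitive-pair claim at all.  For the algebraic case, the paper quotes two external results---that area quadrature domains are algebraic (\cite{B5}) and that biholomorphisms between algebraic domains are algebraic via Bergman coordinates (\cite{B6})---while you instead argue intrinsically: $g_1$ is algebraic over $\C(z)$, $f$ is algebraic over $\C(g_1)$ because both live on the compact surface $\widehat\Om_1$, and transitivity finishes the job.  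For the Briot--Bouquet case, the paper uses a primitive pair of Ahlfors maps $g_1,g_2$, writes $f=R(g_1,g_2)$, differentiates, and builds a finite branched cover of $\widehat\Om_1$ to which $g_1',g_2'$ (and hence $f,f'$) extend; you bypass the geometric construction by the field-theoretic observation that implicit differentiation of the minimal polynomial over $\C(g_1)$ keeps $u'$ inside the algebraic closure of $\C(g_1)$ once $g_1'$ is there, so $G\circ f$ and $(G\circ f)'$ both lie in a transcendence-degree-one extension of $\C$ and are therefore algebraically dependent.  Your route is shorter and more self-contained; the paper's route produces an explicit compact Riemann surface carrying all the relevant functions, which ties in with the complexity viewpoint of \cite{B4} and \cite{B6} used elsewhere in the article.
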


\begin{proof}[Proof of Theorem~\ref{thmbb}]
It is shown in \cite{B5} that area quadrature domains are algebraic
domains, and it is shown in \cite{B6} that biholomorphic mappings
between algebraic domains are algebraic (since biholomorphic maps
in Bergman coordinates are algebraic and Bergman coordinates themselves
are rational combinations of the Bergman kernel, which is algebraic
in algebraic domains).  Hence,the first part of the theorem is proved.

Suppose now that $\Om_1$ is a Briot-Bouquet domain.  Since $\Om_2$
is an area quadrature domain, the mapping $f$ extends to the double
of $\Om_1$ as a meromorphic function.  Assume that $G$ extends
meromorphically to the double of $\Om_2$.  Since the property of
extending to the double is preserved under conformal changes of
variables, it follows that $G\circ f$ also extends meromorphically
to the double of $\Om_1$.

It is shown in \cite{B1b} that the field of meromorphic functions
on the double of a smoothly bounded finitely connected domain is
generated by two Ahlfors mappings associated to the domain.  Hence,
there are Ahlfors maps $g_1$ and $g_2$ associated to two distinct
points in $\Om_1$ such that $f(z)=R(g_1(z),g_2(z))$, where $R$ is
a complex rational function of two variables.  Differentiating
this formula reveals that $f'$ is equal to
$g_1'$ times a rational function of $g_1$ and $g_2$ plus
$g_2'$ times a rational function of $g_1$ and $g_2$.
Since $\Om$ is Briot-Bouquet, the Ahlfors maps are Briot-Bouquet
functions (see \cite{B4}).  Hence, there are polynomials $P_1$ and
$P_2$ of two complex variables such that $P_j(g_j',g_j)\equiv0$, $j=1,2$.
Now the procedure used in \cite{B4} to construct the compact
Riemann surface to which all the kernel functions extend can
be applied here.  Since $g_1$ and $g_2$ extend meromorphically to
the double of $\Om_1$, we may use the identities
$P_j(g_j',g_j)\equiv0$, $j=1,2$,
to extend $g_1'$ and $g_2'$ as meromorphic functions on a compact
Riemann surface that is a finite branched cover of the double of
$\Om_1$.  Now both $f$ and $f'$ extend meromorphically to this
compact Riemann surface, and they are therefore algebraically
dependent.  Hence, there is a polynomial $P$ of two complex variables
such that $P(f',f)\equiv0$ on $\Om$, i.e., $f$ is a Briot-Bouquet
function.  The same reasoning can be applied to $G\circ f$ to
conclude that it, too, is a Briot-Bouquet function.
\end{proof}

If $f:\Om_1\to\Om_2$ is a biholomorphic mapping between smoothly
bounded domains, then the Green's functions associated to the
domains transform via
$$G_1(z,w)=G_2(f(z),f(w)).$$
Consequently,
$$G_1^{(1)}(z,w)=
G_2^{(1)}(f(z),f(w))f'(w),$$
and we can use the results of \S\ref{secD} to pull back
results about the Poisson kernel and Green's functions on double
quadrature domains to algebraic and Briot-Bouquet domains.
Before we state the resulting theorem, note that the
complex unit tangent function transforms via
$$T_2(f(z))=\frac{f'(z)}{|f'(z)|}T_1(z),$$
and so
$$p_1(z,w)=p_2(f(z),f(w))|f'(z)|$$
(which could also be deduced directly from considerations
of arc length, but the transformation rule for the complex
unit tangent vector functions is worth writing down here
because it allows one to pull back complexity results about
the tangent function from double quadrature domains).

\begin{thm}
\label{thm4}
Suppose that $\Om$ is a bounded domain bounded by finitely
many non-intersecting $C^\infty$ smooth curves that is either
an algebraic or Briot-Bouquet domain, and suppose that
$f(z)$ is a biholomorphic mapping of $\Om$ onto a double
quadrature domain with Schwarz function $S(z)$.
The Poisson kernel associated to
$\Om$ is given by
$$p(z,w)=
Q_0(z,w)+\sum_{k=1}^{n-1} \sigma_k(z)Q_k(w),$$
for $z\in\Om$ and $w\in b\Om$, where
$Q_0(z,w)$ is a rational combination of
$f(w)$,
$S(f(w))$,
$f(z)$,
$S(f(z))$,
$\overline{f(z)}$, and
$\overline{S(f(z))}$ times $|f'(w)|$,
$Q_k(w)$ is a rational combination of
$f(w)$ and $S(f(w))$ times $|f'(w)|$,
and $\sigma_k(z)$ is equal to
$(g_k(z)+\overline{G_k(z)}+\ln|f(z)-b_k|)$,
where
$g_k$ and $G_k$ have derivatives that are
rational functions of $f(z)$ and $S(f(z))$ times $f'(z)$.
In the case that $\Om$ is an algebraic domain, $f(z)$, $f'(z)$,
and $S(f(z))$ are algebraic and $|f'(z)|$ is real algebraic,
and
$g_k$ and $G_k$ have algebraic derivatives.
In the case that $\Om$ is a Briot-Bouquet domain, $f(z)$,
$S(f(z))$, and $f'(z)$ are Briot-Bouquet functions, and
$g_k$ and $G_k$ have derivatives that are
Briot-Bouquet functions times $f'(z)$.
\end{thm}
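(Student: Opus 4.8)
The plan is to obtain the formula by pulling back the Poisson kernel of the target double quadrature domain, and then to extract the algebraic and Briot-Bouquet refinements from Theorem~\ref{thmbb}. Write $\Om_2=f(\Om)$. First I would apply Theorem~\ref{thm3a} to $\Om_2$ to express its Poisson kernel as
$$p_2(\zeta,\eta)=\widetilde Q_0(\zeta,\eta)+\sum_{k=1}^{n-1}\widetilde\sigma_k(\zeta)\,\widetilde Q_k(\eta),$$
where $\widetilde Q_0$ is a rational combination of $\eta,\bar\eta,\zeta,\bar\zeta,S(\zeta),\overline{S(\zeta)}$, where $\widetilde Q_k$ is rational in $\eta$ and $\bar\eta$, and where $\widetilde\sigma_k(\zeta)=h_k(\zeta)+\overline{H_k(\zeta)}+\ln|\zeta-b_k|$ with $h_k'$ and $H_k'$ rational in $\zeta$ and $S(\zeta)$.

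Next I would substitute $\zeta=f(z)$ and $\eta=f(w)$ and multiply by $|f'(w)|$, using the Poisson-kernel transformation rule recorded above, $p(z,w)=p_2(f(z),f(w))\,|f'(w)|$. The essential point is that for $w\in b\Om$ the image $f(w)$ lies on $b\Om_2$, so that $\overline{f(w)}=S(f(w))$; hence every occurrence of $\bar\eta$ becomes $S(f(w))$. With this identification the term $|f'(w)|\,\widetilde Q_0(f(z),f(w))$ is exactly the claimed $Q_0(z,w)$, a rational combination of $f(w)$, $S(f(w))$, $f(z)$, $S(f(z))$, $\overline{f(z)}$, and $\overline{S(f(z))}$ times $|f'(w)|$; each term $|f'(w)|\,\widetilde Q_k(f(w))$ is $Q_k(w)$, rational in $f(w)$ and $S(f(w))$ times $|f'(w)|$; and $\widetilde\sigma_k(f(z))$ becomes $\sigma_k(z)=g_k(z)+\overline{G_k(z)}+\ln|f(z)-b_k|$ once we set $g_k=h_k\circ f$ and $G_k=H_k\circ f$. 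The chain rule gives $g_k'(z)=h_k'(f(z))f'(z)$ and $G_k'(z)=H_k'(f(z))f'(z)$, so each is a rational function of $f(z)$ and $S(f(z))$ times $f'(z)$, as asserted.

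Finally I would feed in Theorem~\ref{thmbb}. If $\Om$ is an algebraic domain, then $f$ and $S\circ f=S_2\circ f$ are algebraic, whence $f'$ is algebraic, $|f'(z)|=\sqrt{f'(z)\,\overline{f'(z)}}$ is real algebraic, and the formulas above exhibit $g_k'$ and $G_k'$ as rational combinations of algebraic functions times $f'$, hence algebraic. If $\Om$ is a Briot-Bouquet domain, then Theorem~\ref{thmbb} gives that $f$ and $S\circ f$ are Briot-Bouquet; since $f$, $f'$, and hence $f''$ all extend meromorphically to the same compact branched cover of the double used in the proof of Theorem~\ref{thmbb}, $f'$ is algebraically dependent on $f$ and is likewise Briot-Bouquet, and the rational combinations of $f$ and $S\circ f$ occurring in $g_k'$ and $G_k'$ are again Briot-Bouquet, so $g_k'$ and $G_k'$ are Briot-Bouquet functions times $f'(z)$.

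I do not expect a genuine obstacle here: once the transformation rules and the boundary identity $\overline{f(w)}=S(f(w))$ are in place, the argument is a careful substitution. The only points demanding attention are the consistent bookkeeping of conjugates and of the factor $|f'(w)|$ attached to the boundary variable, and the appeal to the closure of the algebraic and Briot-Bouquet classes under rational combination and differentiation, which is supplied by Theorem~\ref{thmbb} together with the cited constructions of \cite{B4} and \cite{B5}.
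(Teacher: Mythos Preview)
Your proposal is correct and follows essentially the same route as the paper: the paper derives Theorem~\ref{thm4} by recording the transformation rule $p_1(z,w)=p_2(f(z),f(w))\,|f'(w)|$ (deduced from the Green's function change of variables together with the transformation law for $T$), applying Theorem~\ref{thm3a} on the target double quadrature domain, and then invoking Theorem~\ref{thmbb} for the algebraic and Briot-Bouquet refinements, just as you do. Your explicit use of the boundary identity $\overline{f(w)}=S(f(w))$ and the chain rule for $g_k'=h_k'(f)\,f'$ simply spells out what the paper leaves to the reader.
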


In Theorem~\ref{thm4}, since an algebraic domain is also a
Briot-Bouquet domain, the techniques of \cite{B6} can be used
to show that $f'(z)$ extends meromorphically to a compact
Riemann surface that is a finite cover of the double to which
$f(z)$ and $S(f(z))$ also extend.  Hence, if $G_1$ and $G_2$ are
a primitive pair for the compact Riemann surface, all of the
functions $f$, $f'$, and $S\circ f$ are rational combinations
of the same two functions $G_1$ and $G_2$.  Hence the Poisson
kernel is composed of basic building blocks that are surprisingly
simple.

\section{Classical boundary operators on double quadrature domains}
\label{secO}

Suppose that $\Om$ is a bounded double quadrature domain.  We will
now show that the Szeg\H o projection associated to $\Om$, as an
operator from $L^2(b\Om)$ to itself, maps rational functions of $z$
and $\bar z$ to the same class of functions.  Suppose that
$u(z)=R(z,\bar z)$ is such a function without singularities on
the boundary $b\Om$.
The Szeg\H o projection $P$ satisfies
$$Pv=v-\overline{P(\,\overline{vT}\,)}\overline{T}$$
on the boundary (see p.~13 of \cite{B1}).  Now,
since $u$ extends antimeromorphically to $\Om$
as $R(\,\overline{S(z)},\bar z)$ and since $T(z)$ is equal
to the boundary values of a meromorphic function on the double
without poles on the boundary curves, we see that the holomorphic
function $Pu$ has the same boundary values as an antimeromorphic
function on $\Om$ that extends smoothly up to the boundary.  Hence,
$Pu$ extends meromorphically to the double, and is therefore a
rational combination of $z$ and $S(z)$.  Now, restricting to the
boundary shows that $Pu$ is a rational combination of $z$ and
$\bar z$, and the proof is complete.  It also follows that the
extension of $Pu$ to $\Om$ as a holomorphic function is a rational
function of $z$ and $S(z)$ without singularities on $\Obar$.
Hence, as an operator from $L^2(b\Om)$ to the space of holomorphic
functions on $\Om$ with $L^2$ boundary values, $P$ maps rational
functions of $z$ and $\bar z$ to algebraic functions.

It is easy to see that the Cauchy transform has the same behavior
on an area quadrature domain.  Indeed, we may write
$$({\mathcal C}u)(z)=\frac{1}{2\pi i}\int_{b\Om} \frac{u(w)}{w-z}\ dw
=\frac{1}{2\pi i}\int_{b\Om} \frac{R(w,S(w))}{w-z}\ dw,$$
and the Residue Theorem yields that $Cu$ is a rational function of
$z$ with poles at the poles of the meromorphic function $R(w,S(w))$
in $\Om$.  The $L^2$ adjoint of the Cauchy transform satisfies
$${\mathcal C}^* u = u - \overline{{\mathcal C}(\,\overline{uT}\,)T}$$
(see \cite{B1}, p.~12).  If $\Om$ is a double quadrature domain,
then $T$ is the restriction to the boundary of a rational function
of $z$ and $\bar z$.  Hence, it follows that ${\mathcal C}^*$
maps the space of rational functions of $z$ and $\bar z$ without
singularities on the boundary into itself.  It now follows that
the Kerzman-Stein operator ${\mathcal A}={\mathcal C}-{\mathcal C}^*$
enjoys the same property.

We close this section by remarking that, since
$dz=T(z)ds$ and $d\bar z = \overline{T(z)}ds$,
and since $T(z)$ extends to the double as a meromorphic
function or as an antimeromorphic function on a double quadrature
domain $\Om$, it is possible to convert an integral of the form
$$\int_{b\Om} R(z,\bar z)\ ds$$
where $R(z,\bar z)$ is a rational function of $z$ and $\bar z$
to an integral
$$\int_{b\Om} R(z,S(z))G(z)\ dz$$
where $G(z)$ and the Schwarz function $S(z)$ are meromorphic
functions, and consequently, the integral can be computed by
means of the residue theorem.  The same conversion can be made
for integrals involving $dz$ or $d\bar z$ in place of $ds$.
Thus, computing integrals in double quadrature domains is
rather like computing integrals of rational functions of
trigonometric functions on the unit circle.

\section{Fourier Analysis on multiply connected domains}
\label{secF}
The results of this paper are probably most interesting
in the realm of double quadrature domains, however, the
ideas espoused here can be viewed as a new way of thinking
about Fourier analysis in multiply connected domains.
Classical Fourier Analysis is expanding functions in
powers of $e^{i\theta}$ and $e^{-i\theta}$.  If we
write $e^{i\theta}=z$, then $e^{-i\theta}=\bar z = 1/z$, and
$1/z$ is the Schwarz function for the unit disc.  On the unit
disc, the Poisson extension of a rational function $R(z,\bar z)$
can be gotten by first extending $R(z,\bar z)$ as the meromorphic
function $R(z,S(z))$ and then subtracting off appropriate
derivatives $(\dee/\dee w)^m G(z,w)$ of the Green's function
at points $w$ where the meromorphic function has poles.
Since the Green's function is
$$-\ln\left|\frac{z-w}{1-\bar w z}\right|,$$
these derivatives are rational functions of $z$ and $\bar z$.
The extension obtained in this way is the same as the harmonic
extension of the Fourier series from the boundary obtained
in Classical Analysis.  (This gives another way to see that
the harmonic extension of a rational function on the boundary
of the unit disc is also rational.  See \cite{E} and
\cite{BEKS} for more on this subject in multiply connected
domains.)

In this paper, we have generalized this idea to the
multiply connected setting.  If $\Om$ is a bounded finitely
connected domain bounded by $n$ non-intersecting $C^\infty$
smooth curves, then it is proved in \cite{B1b} that
there are two Ahlfors maps $g_1$ and $g_2$ associated to
$\Om$ such that the field of meromorphic functions on
the double of $\Om$ is generated by $g_1$ and $g_2$.
Note that $g_1$ and $g_2$ are proper holomorphic maps
of $\Om$ onto the unit disc, and that $|g_j(z)|=1$
when $z\in b\Om$, $j=1,2$.  These functions will take
the place of $e^{\pm i\theta}$ in what follows.

The rational functions of $z$ and $\bar z$ on the unit
circle are exactly the functions that extend from the
unit circle to the double of the unit disc as meromorphic
functions.  On $\Om$, this class corresponds to the class
of all rational functions of
$g_1(z)$,
$g_2(z)$,
$\overline{g_1(z)}$, and
$\overline{g_2(z)}$, which is the same as the class of
all rational functions of $g_1(z)$ and $g_2(z)$, since
$\overline{g_j(z)}=1/g_j(z)$ when $z\in b\Om$, $j=1,2$.
It is interesting to note that this class of functions is
dense in the space of $C^\infty$ functions on the boundary,
and, hence, also dense in the space of continuous functions
on the boundary.  Indeed, any $C^\infty$ function on the
boundary can be decomposed as $h+\overline{HT}$ where $h$
and $H$ are holomorphic functions in $C^\infty(\Obar)$
(see \cite{B1}, p.~13).  It was proved in \cite{B0}
(see also \cite{B1}, p.~29) that the complex linear span
of functions of $z$ of the form $S(z,b)$ as $b$ ranges
over $\Om$ is dense in the space of holomorphic functions
in $C^\infty(\Obar)$.  Fix a point $a\in\Om$, and let
$S_a(z)=S(z,a)$ and $L_a(z)=L(z,a)$ as always.
The Szeg\H o and Garabedian kernels satisfy the identity
\begin{equation}
\label{SL}
\overline{S_a}=\frac{1}{i}L_aT
\end{equation}
on the boundary (see \cite{B1}, p.~24).
Given a
function $\varphi$ in $C^\infty(b\Om)$, we may decompose
$S_a\varphi$ as $h+\overline{HT}$ and we may use
identity~(\ref{SL}) to obtain
$$\varphi=\frac{h}{S_a}-i\frac{\overline{H}}{\overline{L_a}}.$$
Next, if we approximate $h$ and $H$ by linear combinations
of functions of the form $S_b$, we see that $\varphi$ can be
approximated by linear combinations of $S_b/S_a$ and the
conjugates of $S_b/L_a$.  But such quotients extend
meromorphically to the double of $\Om$ (since
identity~(\ref{SL}) reveals that $S_b/S_a$ is equal to the
conjugate of $L_b/L_a$ on the boundary, and $L_b/S_a$ is
equal to the conjugate of $S_b/L_a$ on the boundary).
Hence, the space of functions on the boundary that are
restrictions of meromorphic functions on the double without
poles on the boundary is dense in $C^\infty(b\Om)$.

Consequently, one approach to solving the Dirichlet
problem would be to first approximate a continuous
function on the boundary by rational combinations of
$g_1$ and $g_2$.  The approximation can be extended
meromorphically to the double.  Next, the poles of the
extended function can be subtracted off by derivatives
of the Green's function in the second variable to obtain
a solution to the Dirichlet problem for the approximation.
Various forms of the derivatives of the Green's function
can be used to deduce facts about the complexity of
the solution.  For example, Theorem~1.2 of \cite{B6a}
expresses the Green's function in finite terms involving
a single Ahlfors map, say $g_1$.  Or Theorem~\ref{thmB}
of the present work can be applied to see that the
solution is a rational function of $g_1$ and $g_2$
and their conjugates modulo an explicit $n-1$ dimensional
subspace.

\end{document}